%% file: main.tex
\documentclass[9pt,journal]{IEEEtran}
\usepackage{cite} 
\usepackage{amsmath,amssymb,amsthm,graphics,epsfig,times,enumitem,float,mathtools,mathrsfs}
\usepackage{chngcntr}
\usepackage{apptools}
\AtAppendix{\counterwithin{lemma}{section}}
\usepackage{tikz}
\usetikzlibrary{arrows}
\usetikzlibrary{shapes}
\usetikzlibrary{arrows.meta} 
\usetikzlibrary{calc,intersections,through,backgrounds}
\usetikzlibrary{positioning}
\usepackage{pgfplots}
\usepgfplotslibrary{fillbetween}
\pgfplotsset{compat=1.16}
\usepackage{pgfplotstable}
\usepackage{filecontents}
\usepackage{multirow}
\usepackage{upgreek,bm}
\usepackage[linesnumbered,ruled]{algorithm2e}
\usepackage[caption=false]{subfig} 

\usepackage[dvipsnames]{xcolor}
\usepackage{hyperref}

\hypersetup{
    colorlinks=true,
    citecolor=blue,
    linkcolor=blue,
    urlcolor=blue
}
\theoremstyle{theorem}
\newtheorem{theorem}{Theorem} 
\newtheorem{corollary}{Corollary} 
\newtheorem{lemma}{Lemma} 
 
\theoremstyle{definition}
\newtheorem{definition}{Definition} 
\newtheorem{assumption}{Assumption} 
\newtheorem{remark}{Remark} 
 
\newcommand\normx[1]{\Vert#1\Vert}

\newcommand{\R}{\mathbb{R}}
\newcommand{\N}{\mathsf{N}}
\newcommand{\K}{\mathsf{K}}
\newcommand{\U}{\mathsf{U}}

\newcommand{\X}{\mathsf{X}}
\newcommand{\bu}{\mathsf{u}}

\newcommand{\col}[1]{\mathrm{col}\{#1\}}

\usepackage{textcomp}
\def\BibTeX{{\rm B\kern-.05em{\sc i\kern-.025em b}\kern-.08em
    T\kern-.1667em\lower.7ex\hbox{E}\kern-.125emX}}
\begin{document}

\title{Infinite-Horizon Linear-Quadratic Difference Games with Coupled Affine Inequality Constraints: Open-Loop Generalized Nash Equilibria}
\author{Partha Sarathi Mohapatra 
		\thanks{
		{Partha Sarathi Mohapatra is an independent researcher (e-mail: ps.mohapatra.work@gmail.com)}}} 	
	\date{ }
	\maketitle
    \thispagestyle{headings}
	\begin{abstract} 
        In this technical note, we study a class of infinite-horizon linear-quadratic difference games with coupled affine inequality constraints involving both state and control variables. We derive necessary conditions for the existence of open-loop generalized Nash equilibria and establish their sufficiency under additional assumptions. Sufficient conditions are characterized in terms of the existence of square-summable solutions to associated infinite-horizon linear complementarity systems. We further reformulate these conditions and show that computing open-loop generalized Nash equilibria reduces to solving a large-scale linear complementarity problem together with verifying additional conditions. Finally, we illustrate our results using a vehicle platooning example with constraints.
	\end{abstract}
\begin{IEEEkeywords}
      Difference games; infinite-horizon; coupled inequality constraints; open-loop generalized Nash equilibrium; linear complementarity problem 
\end{IEEEkeywords}
\section{Introduction}
Dynamic games provide a framework for modeling strategic interactions among multiple decision makers, where each player's actions influence both the system dynamics and the objectives of other players. The outcome of such interactions is commonly characterized by equilibrium concepts, most notably the Nash equilibrium, in which no player can improve its objective through unilateral deviation \cite{Basar:1999,Engwerda:2005}. Furthermore, in dynamic games, equilibrium strategies are shaped by the information available to players \cite{Basar:1999}, leading to open-loop formulations, in which strategies depend on time and the initial condition, and feedback formulations, in which they depend on the current state. Dynamic games find applications in many areas, including economics \cite{Jorgensen:2010}, cyber-security \cite{Etesami:2019}, communication networks \cite{Zazo:2016}, robotics \cite{Li:2019}, and power systems \cite{Chen:2014}.

A key limitation of classical dynamic game formulations is the assumption of rectangular action sets, in which each player selects actions from an admissible set that is independent of the actions of the other players. Consequently, coupling arises only through interdependent objective functions and system dynamics. In many applications, however, feasibility is jointly determined by limitations on shared resources, communication constraints, or aggregate constraints, resulting in coupled or non-rectangular feasible action sets. This gives rise to the generalized Nash equilibrium (GNE) framework, which extends the classical Nash equilibrium by incorporating coupled constraints into the strategic optimization of players \cite{Arrow:1954,Rosen:1965}. In static games, the existence and properties of GNE have been extensively studied using variational inequality techniques \cite{Harker:1991,Facchinei:2010_a}. By contrast, analogous results in dynamic game settings remain comparatively underdeveloped.

For unconstrained finite and infinite-horizon dynamic games, a relatively complete theory exists; see, e.g., \cite{Basar:1999,Engwerda:2005,Engwerda:2000,Engwerda:2007,Monti:2024}. In contrast, constrained dynamic games have received comparatively less attention. Linear-quadratic (LQ) differential games
with implicit equality constraints modeled by differential-algebraic equations have been studied in \cite{Engwerda:2009,Engwerda:2012}, and stochastic variants are analyzed in \cite{Tanwani:2019}. The authors in \cite{Reddy:2015,Reddy:2017} studied LQ difference games with affine constraints, under structural decompositions and partial decoupling assumptions. More general formulations with equality and inequality constraints have been considered in \cite{Laine:2023}, together with numerical methods for feedback Nash equilibria. Dynamic potential games with inequality constraints were studied in \cite{Zazo:2016}, where restrictive structural assumptions enabled reformulation as constrained optimal control problems. GNE for deterministic and mean-field-type constrained LQ games with finite horizons and open-loop information structures were investigated in \cite{Partha:2023} and \cite{Partha:2026a}, while quasi-hierarchical difference games with constraints were studied in \cite{Partha:2026b}. Recent works \cite{Benenati:2026,Baghbadorani:2026} have also considered receding-horizon implementations of equilibrium using variational inequality methods.

Nevertheless, a general existence theory for open-loop generalized Nash equilibria in infinite-horizon linear-quadratic deterministic difference games with coupled affine inequality constraints remains unavailable. In particular, necessary and sufficient conditions for the existence of equilibrium in this setting have not been established. This note addresses this gap in the literature on dynamic games.

\textit{Contributions:}
In this note, we study infinite-horizon deterministic linear–quadratic difference games with coupled affine inequality constraints. The contributions of this technical note are twofold.

First, we derive necessary conditions for the existence of open-loop generalized Nash equilibrium (OL-GNE) strategies for this class of infinite-horizon constrained LQ difference games and establish their sufficiency under additional assumptions. The sufficient conditions are characterized in terms of the existence of square-summable solutions to associated infinite-horizon linear complementarity systems (LCSs); see Theorems \ref{cor:C1} and \ref{cor:C2}. 
To the best of our knowledge, this provides the first characterization of OL-GNE in infinite-horizon LQ difference games with coupled affine inequality constraints.

Second, we derive verifiable semi-analytic existence conditions for OL-GNE under additional structural assumptions. Although, the exact characterization is expressed via square-summable solutions of infinite-horizon LCSs, the proposed reformulation yields conditions that can be evaluated directly from the problem data; see Theorems \ref{th:Th_K} and \ref{th:LCP}. In addition, these reformulations also enable the computation of OL-GNE strategies, providing an alternative to the receding-horizon MPC-based computational approaches in \cite{Benenati:2026,Baghbadorani:2026}.

This note is organized as follows. Section \ref{sec:Preliminaries}, introduces infinite-horizon LQ difference games with coupled-affine inequality constraints. Necessary and sufficient conditions for OL-GNE are presented in Sections  \ref{sec:Necessary} and \ref{sec:Sufficient}, respectively. The sufficient conditions for OL-GNE are reformulated into more tractable form in Section \ref{sec:Solvability}. Section \ref{sec:Numerical} illustrates our results using a vehicle platooning example with coupled constraints and Section \ref{sec:Conclusions} concludes this note.

\textit{Notation:} 
We denote the sets of natural numbers, non-negative integers, real numbers, $n$-dimensional Euclidean space, $n$-dimensional non-negative orthant, and $n \times m$ real matrices by $\mathbb{N}$, $\mathbb{N}_{0}$, $\mathbb{R}$, $\mathbb{R}^n$, $\mathbb{R}^n_{+}$, and $\mathbb{R}^{n \times m}$, respectively. Transposes of a vector $a$ and a matrix $A$ are denoted by $a^{\prime}$ and $A^{\prime}$. 
For $A \in \mathbb{R}^{n \times n}$ and $a \in \mathbb{R}^n$, with $n = n_1 + \cdots + n_K$, $[A]_{ij}$ and \([a]_i\) 
denote the corresponding $n_j\times n_j$ submatrix and $n_i\times 1$ subvector, respectively.
Column vectors $[v_1^{\prime}, \cdots, v_n^{\prime}]^{\prime}$ are written as $\col{v_1, \cdots, v_n}$ or $\col{v_k}_{k=1}^n$. 
$\ell^2(\R^{n})$ is the space of all square-summable sequences taking values in $\R^n$. For a sequence $(x_k)_{k=0}^{\infty}$, $x_k\to 0$ denotes $\underset{k\to \infty}{\text{lim}}x_k=0$. The symbols $I$ and $0$ denote the identity and zero matrices of compatible dimensions, respectively.
The block diagonal matrix with diagonal elements $M_1, \cdots, M_K$ is denoted by $\oplus_{k=1}^{K}M_k$. The Kronecker product is $\otimes$. 
Vectors $x, y \in \mathbb{R}^n$ are complementary if $x \geq 0$, $y \geq 0$, and $x^{\prime}y = 0$, denoted by $0 \leq x \perp y \geq 0$.
\section{Infinite-Horizon Difference Games with Coupled Inequality Constraints}\label{sec:Preliminaries}
In this section, we introduce the infinite-horizon non-zero sum LQ difference game with coupled inequality constraints. We consider $N\geq 2$ players with $N\in\mathbb{N}$ and denote the set of players by $\N = \{1, 2, \cdots, N\}$. At each time instant $k \in \mathbb{N}_0$, each player $i \in \N$ selects an action $u^i_k\in \R^{m_i}$ and influences the evolution of the state as follows:
\begin{subequations}\label{eq:Game1}
    \begin{align}
        &x_{k+1}=Ax_k+\sum_{j\in\N}B^ju_k^j=Ax_k+\bar{B}\bu_k,\label{eq:Gstate}
    \end{align}
    where $A \in \R^{n\times n}, B^{i}\in \R^{n\times m_i}$, $\bar{B}=[B^{1}~B^{2} \cdots B^{N}]$, $\bu_k:=\col{u_k^{i}}_{i=1}^{N}\in \R^m$ ($m=\sum_{i\in\N}m_i$), with a given initial condition $x_0\in\R^n$. We further assume that, at every $k \in \mathbb{N}_0$, these decision variables for each player $i\in\N$ satisfy the following mixed coupled-affine inequality constraints
    \begin{align}
        &M^ix_k+\sum_{j\in\N}N^{ij}u_k^{j}+r^i \geq 0,\label{eq:Gconstraints}
    \end{align}
    where $M^{i} \in \R^{c_i\times n}, N^{ij}\in \R^{c_i\times m_j}, r_k^{i}\in \R^{c_i}$.
    For player $i\in\N$, we denote the rest all of player except player $i$ as $-i$, i.e., $-i:=\N\setminus\{i\}$.
	At any instant $k\in \mathbb{N}_0$ the collection of actions of all players except player $i$ is denoted by $u^{-i}_k:=\col{u_k^1,\cdots,u_k^{i-1},u_k^{i+1},\cdots,u_k^N}$. The profile of actions, also referred to as a strategy, of player $i\in \N$ is denoted by $\bu^{i}:=(u_k^{i})_{k=0}^{\infty}$, and the  strategies of all players except player $i$ is denoted by $\bu^{-i}:=(u_k^{-i})_{k=0}^{\infty}$. Each player $i\in \N$, using her strategy $\mathsf {u}^i$, seeks to minimize the following interdependent stage-additive infinite-horizon cost functional
    \begin{align}
        &J^i(x_0,(\bu^i,\bu^{-i}))=\tfrac{1}{2}\sum_{k=0}^{\infty}\big(x_k^{\prime}Q^ix_k+\sum_{j\in\N}u_k^{j\prime}R^{ij}u_k^j\big),\label{eq:Gcost}
    \end{align}
\end{subequations}
where $R^{i j}\in  {\R^{m_i \times m_j}}$, $R^{i i}=R^{ii\prime}$, $Q^i\in \R^{n \times n},\, Q^i =Q^{i\prime}$ for $i,j \in \N$.  
Due to linear dynamics, coupled constraints and interdependent infinite-horizon quadratic objectives, \eqref{eq:Game1} constitutes a $N$-player infinite-horizon  non-zero-sum LQ difference game with coupled inequality constraints, which we refer to as IDGC for the  remainder of this note.
\section{Open-loop Generalized Nash Equilibrium}
In this section, we derive necessary and sufficient conditions for the existence of open-loop generalized Nash equilibrium for the IDGC \eqref{eq:Game1}. We begin by defining admissible strategy spaces for the players and state the required assumptions.
Admissibility of the players’ actions is determined by the coupled inequality constraints in \eqref{eq:Gconstraints}. Moreover, we restrict our attention to stabilizing control strategies that drive the state to the origin while ensuring the cost functional in \eqref{eq:Gcost} is finite. 
By eliminating the state variable from \eqref{eq:Gconstraints} using \eqref{eq:Gstate}, and collecting the constraints of all players, we obtain the joint constraints at stage $k\in \mathbb{N}_0$ as follows:
\begin{align}
	&\bar{M} (A^{k} x_0+A^{k-1}B\bu_{0}+\cdots
	+ AB\bu_{k-2}+ B\bu_{k-1})\nonumber\\
	&\hspace{1.5in}+\bar{N} \bu_k+r \geq 0,\label{eq:vectorC1}
\end{align}
where, $\bar{M}=\col{M^i}_{i=1}^{N}\in\R^{c\times n}$, $\bar{N}=\col{[N^{i1} \cdots N^{iN}]}_{i=1}^{N}\in\R^{c\times m}$,  $r=\col{r^i}_{i=1}^N\in\R^{c}$ and $c=\sum_{i\in\N}c_i$. We define the set
\begin{align}
	\Omega:&=\{(x_0,(\bu^i,\bu^{-i})) \in \mathbb R^n \times \ell^2(\R^{m})\nonumber\\
    &\hspace{1.0in}~|~\eqref{eq:vectorC1} \text{ holds for all } k\in \mathbb{N}_0\}.
	\label{eq:sigma}
\end{align}
The set of initial conditions for which all coupled constraints in \eqref{eq:vectorC1} hold is then given by
\begin{align}
	\mathsf X_0:=\{x_0\in \mathbb R^n~|~\Omega \neq \emptyset\}. \label{eq:feasiableinit}
\end{align}
Clearly,  $\Omega\neq \emptyset$ implies $\mathsf X_0\neq \emptyset$. For any $x_0\in \mathsf X_0$, the joint feasible strategy space of the players is given by
\begin{align}
	\mathsf{R}(x_0):=\{(\bu^{i}, \bu^{-i})\in \ell^2(\R^{m})~|~(x_0,(\bu^i,\bu^{-i}))\in \Omega\}.
	\label{eq:CConstraint}
\end{align} 
For any $x_0\in \mathsf X_0$, using the set $\mathsf{R}(x_0)$, we define the following:
\begin{subequations}
    \begin{align}
	&\U^{-i}(x_0):=\{\bu^{-i}\in  \Uppi_{j=\in-i} \ell^2(\R^{m_j})~|~ \exists \bu^i \in \ell^2(\R^{m_i})\nonumber\\
    &\hspace{1.0in}~\text{such that}~ (\bu^{i}, \bu^{-i}) \in \mathsf{R}(x_0)\}.\label{eq:Uj}
\end{align}
For a given $x_0\in\X_0$, $\U^{-i}(x_0)$ is the set of all control sequences of players in $\N\setminus\{i\}$, for which player $i$ has control sequences satisfying the coupled constraints \eqref{eq:Gconstraints}. Next, for a given $\bu^{-i}\in \U^{-i}(x_0)$, we define the admissible action space of player $i$ as follows:
\begin{align}
    &\U^i(\bu^{-i};x_0):=\{\bu^{i}\in \ell^2(\R^{m_i})~|~(\bu^{i}, \bu^{-i}) \in \mathsf{R}(x_0),\nonumber\\
    &\hspace{1.0in}~J^i(x_0,(\bu^i,\bu^{-i})) <\infty, ~x_k \to 0\}.\label{eq:AdmissibleSet}
\end{align}
\end{subequations}
From the above, note that, for a given $\bu^{-i}\in \U^{-i}(x_0)$, the set $\U^i(\bu^{-i};x_0)$ is the collection of all square-summable stabilizing controls of player $i$ that yield a finite cost. Next, for IDGC \eqref{eq:Game1}, we make the following standard assumptions:
\begin{assumption}\label{ass:G1}
$\Omega\neq \emptyset$ and for $i\in\N$, (i) $\U^i(\bu^{-i};x_0)$ is nonempty for $x_0\in \mathsf X_0$, (ii) $Q^i\succeq 0$, $R^{ii} \succ 0$, (iii) $(A, B^i)$ is stabilizable, $(A, Q^i)$ is detectable and (iv) no row of $N^{ii}$ is identically zero.
\end{assumption}
\noindent 
If \(\Omega \neq \emptyset\), then from \eqref{eq:feasiableinit} and \eqref{eq:CConstraint}, $\mathsf{R}(x_0) \neq \emptyset$ for $x_0 \in \mathsf{X}_0$. By Item (i) admissible controls for each player $i\in\N$ exist, for which her cost is finite (see \eqref{eq:AdmissibleSet}). Item (ii) and (iii) are standard technical requirements. Under Item (iv), only coupled constraints that explicitly involve player $i$'s control variables are included in \eqref{eq:Gconstraints} of player $i$.
%
\begin{remark}\label{rem:ri_A1}
    If Assumption \ref{ass:G1} holds, then Item (i) and \eqref{eq:AdmissibleSet} imply that, for every $\bu^i\in \U^i(\bu^{-i};x_0)$, we have $(\bu^{i},\bu^{-i})\in\mathsf{R}(x_0)$ and $x_k\to 0$. Moreover, by \eqref{eq:CConstraint} and \eqref{eq:sigma}, $\bu^i\in \ell^2(\R^{m_i})$, so $u_k^i\to 0$ for each $i\in\N$. Since \eqref{eq:sigma} also ensures that \eqref{eq:Gconstraints} holds $\forall k\in\mathbb{N}_0$, taking the limit as $k\to\infty$ in \eqref{eq:Gconstraints} yields $r^i\ge 0$ for all $i\in\N$. Hence, the condition $r^i\ge 0$, $i\in\N$, is implicitly contained in Assumption \ref{ass:G1}.
\end{remark}
\begin{remark}\label{rem:purestateC}
 Affine pure state constraints for $k\geq 1$ can be reformulated as mixed constraints \eqref{eq:Gconstraints} using the linear dynamics \eqref{eq:Gstate}, and thus fit within our framework, provided the state is coupled to at least one player action (i.e., corresponding row of $\bar{B}$ is not identically zero). Pure state constraints at $k = 0$ (i.e., on $x_0$) simply further restrict the sets $\Omega$ and $\X_0$.
\end{remark}
Next, we present the definition of open-loop generalized Nash equilibrium for IDGC \eqref{eq:Game1}.
\begin{definition}\label{def:OCNEdef} 
 For a given $x_0\in\X_0$, an admissible strategy profile $\bu^{\star}=\col{\bu^{i\star}}_{i=1}^{N}$ with $\bu^{i\star}\in 	\U^i(\bu^{-i\star}; x_0),~ i\in\N$, is an open-loop generalized Nash equilibrium (OL-GNE) for IDGC if for each $i\in \N$ the following inequality holds 
	\begin{align}
		J^i(x_0, (\bu^{i\star}, \bu^{-i\star})) \leq J^i(x_0, (\bu^i, \bu^{-i\star})),~\forall \bu^i\in \U^i(\bu^{-i\star}; x_0).\label{eq:OCNEdef}
	\end{align} 
\end{definition}
\subsection{Necessary Conditions}\label{sec:Necessary}
The OL-GNE of IDGC \eqref{eq:Game1} is obtained by simultaneously solving the $N$ coupled constrained infinite-horizon optimal control problems in \eqref{eq:OCNEdef}, where, $\forall \bu^i\in\U^i(\bu^{-i\star};x_0)$, the inequalities $M^ix_k+N^{ii}u_k^i+\sum_{j\in-i}N^{ij}u_k^{j\star}+r^i\geq 0,~k\in\mathbb{N}_0$ hold.
The necessary conditions for the existence of OL-GNE in IDGC are obtained by applying the infinite-horizon discrete time maximum principle \cite{Blot:2014} to each of these problems in \eqref{eq:OCNEdef} and are given by the following infinite-horizon discrete-time coupled linear complementarity system (LCS):
\begin{subequations}\label{eq:LCS1}
    \begin{align}
        &x_{k+1}^{\star}=Ax_k^{\star}-\sum_{j\in\N}B^j(R^{jj})^{-1}(B^{j\prime}\lambda_{k+1}^{j\star}-{N^{jj}}^{\prime}\mu_k^{j\star}),\label{eq:LCS1_eq1}\\
        &\lambda_k^{i\star}=Q^ix_k^{\star}+A^{\prime}\lambda_{k+1}^{i\star}-{M^i}^{\prime}\mu_k^{i\star},~i\in \N,\label{eq:LCS1_eq2}\\
        &0\leq M^ix_k^{\star}-\sum_{j\in\N}N^{ij}(R^{jj})^{-1}(B^{j\prime}\lambda_{k+1}^{j\star}-{N^{jj}}^{\prime}\mu_k^{j\star})\nonumber\\
        &\hspace{1.25in}+r^i \perp \mu_k^{i\star}\geq 0,~i\in \N,\label{eq:LCS1_eq3}
    \end{align}
\end{subequations}
   with $x_0^{\star}=x_0$. If the LCS \eqref{eq:LCS1} admits a solution and $R^{ii}\succ 0$, then the corresponding candidate equilibrium control of player $i\in\N$ is
\begin{align}
    u_k^{i\star}=-(R^{ii})^{-1}\big(B^{i\prime}\lambda_{k+1}^{i\star}-{N^{ii}}^{\prime}\mu_k^{i\star}\big), ~k\in\mathbb{N}_0.\label{eq:BarU1}
\end{align}
Since \eqref{eq:LCS1} provides only the necessary conditions, any solution, if it exists, yields a candidate OL-GNE for \eqref{eq:Game1}. Furthermore, due to the stabilizing property of an OL-GNE, the associated costate sequences $\lambda_k^{i\star}$, $i\in\N$, must additionally satisfy suitable transversality conditions as $k\to \infty$ (see \cite{Aseev:2017, Blot:2014} for various transversality conditions). The specific transversality conditions
imposed on the solutions of LCS \eqref{eq:LCS1}, will be introduced later in the discussion of sufficient conditions.


\subsection{Sufficient Conditions}\label{sec:Sufficient}
In this section, we derive sufficient conditions for existence of OL-GNE in IDGC \eqref{eq:Game1}. To this end, we make the following assumption.
\begin{assumption}\label{ass:G2}
    For each $i\in\N$, let $E^i\succ 0$ be the stabilizing solution to the following discrete-time Riccati equation
     \begin{align}
         E^i=Q^i+A^{\prime}E^iA-A^{\prime}E^{i}B^i(R^{ii}+B^{i\prime}E^iB^i)^{-1}B^{i\prime}E^iA.\label{eq:Eeq}
     \end{align}
\end{assumption}
\begin{remark}\label{rem:R1}
    Under Assumptions \ref{ass:G1}.(ii) and \ref{ass:G2}, note that $Y^i=R^{ii}+B^{i\prime}E^iB^i \succ 0$. Further, from \eqref{eq:Eeq}, $(A-B^iL^i)^{\prime}E^i (A-B^iL^i)-E^i=-Q^i-L^{i\prime}R^{ii}L^i \preceq 0$, where $L^i=(Y^i)^{-1}B^{i\prime}E^iA$.
    So, by Assumptions \ref{ass:G2}, $A-B^iL^i=(I-B^i(Y^i)^{-1}B^{i\prime}E^i)A$ is Schur stable.
\end{remark}
Next, we present four lemmas that will be used to establish sufficient conditions for the existence of OL-GNE in IDGC.
\begin{lemma}\label{lem:L00}
Suppose $Z\in\R^{n\times n}$ is Schur stable and $z_0$ is finite. (i) If $z_{k+1}=Zz_{k}$, then $(z_k)_{k=0}^{\infty}\in \ell^2(\R^{n})$. (ii) If $z_{k+1}=Zz_{k}+Ss_k$ and $(s_k)_{k=0}^{\infty}\in \ell^2(\R^{m_i})$, then $(z_k)_{k=0}^{\infty}\in \ell^2(\R^{n})$.
\end{lemma}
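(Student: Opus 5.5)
For item (i), the plan is to use the standard spectral-radius bound for Schur stable matrices. Since $Z$ is Schur stable, its spectral radius $\rho(Z)$ is strictly less than one. Fix any $\gamma$ with $\rho(Z)<\gamma<1$. By Gelfand's formula, $\|Z^k\|^{1/k}\to\rho(Z)$, so there is a constant $c\geq 1$ with $\|Z^k\|\le c\gamma^k$ for all $k\in\mathbb{N}_0$. Equivalently, one can use a Lyapunov certificate: a $P\succ 0$ solving $Z'PZ-P=-I$. From $z_k=Z^kz_0$ we get
\begin{align*}
\sum_{k=0}^{\infty}\|z_k\|^2\le c^2\|z_0\|^2\sum_{k=0}^{\infty}\gamma^{2k}=\frac{c^2\|z_0\|^2}{1-\gamma^2}<\infty ,
\end{align*}
which gives $(z_k)\in\ell^2(\R^n)$.

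For item (ii), I write the solution by variation of constants:
\begin{align*}
z_k=Z^kz_0+\sum_{l=0}^{k-1}Z^{k-1-l}Ss_l .
\end{align*}
The first term is in $\ell^2$ by item (i). The second term is the discrete convolution $h*s$ of the kernel $h_j=Z^{j}S$ (with $h_j=0$ for $j<0$) with the sequence $s$, shifted by one step. The kernel satisfies $\|h_j\|\le c\|S\|\gamma^j$, so $(\|h_j\|)\in\ell^1$. Young's inequality for sequences, $\|h*s\|_{\ell^2}\le\|h\|_{\ell^1}\|s\|_{\ell^2}$, then gives a convolution term in $\ell^2$ with norm at most $\frac{c\|S\|}{1-\gamma}\|s\|_{\ell^2}$. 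Since $\ell^2$ is a vector space, the sum of the two terms is in $\ell^2$.

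The only point that needs real care is the $\ell^1*\ell^2\subset\ell^2$ step. If I want to avoid quoting Young's inequality, I will prove it directly with Cauchy--Schwarz. Write $a_k=\sum_{l<k}\gamma^{k-1-l}\|s_l\|$. Then
\begin{align*}
a_k^2\le\Big(\sum_{l<k}\gamma^{k-1-l}\Big)\Big(\sum_{l<k}\gamma^{k-1-l}\|s_l\|^2\Big)\le\frac{1}{1-\gamma}\sum_{l<k}\gamma^{k-1-l}\|s_l\|^2 .
\end{align*}
Summing over $k$ and exchanging the order of summation (Tonelli, since all terms are nonnegative) gives
\begin{align*}
\sum_k a_k^2\le\frac{1}{(1-\gamma)^2}\sum_l\|s_l\|^2 .
\end{align*}

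An alternative route uses the Lyapunov function $V_k=z_k'Pz_k$. Expanding $V_{k+1}$ gives
\begin{align*}
V_{k+1}\le V_k-\|z_k\|^2+2z_k'Z'PSs_k+s_k'S'PSs_k .
\end{align*}
Bounding the cross term by $\tfrac12\|z_k\|^2+C\|s_k\|^2$ and telescoping then yields the same conclusion. I would present the convolution argument as the main proof and keep this as a remark.
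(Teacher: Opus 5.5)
Your proposal is correct and follows essentially the same route as the paper: the exponential bound $\|Z^k\|\le C\rho^k$, variation of constants, and the weighted Cauchy--Schwarz estimate followed by exchanging the order of summation. The only cosmetic difference is that the paper combines the two terms via $(a+b)^2\le 2(a^2+b^2)$ rather than invoking that $\ell^2$ is a vector space (or Young's inequality), which amounts to the same thing.
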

\begin{proof}
    (i) As $Z$ is Schur stable, there exist some $0<C<\infty$ and $0 \leq \rho <1$, such that $\normx{Z^{\tau}}\leq C\rho^{\tau}$. So  $\normx{z_k}\leq\normx{Z^{k}}\normx{z_{0}}\leq C\rho^k\normx{z_0}$ and this implies $\sum_{k=0}^{\infty}\normx{z_k}^2\leq C^2\normx{z_0}^2\sum_{k=0}^{\infty}\rho^{2k}=\tfrac{C^2\normx{z_0}^2}{(1-\rho^2)}<\infty$ i.e., $(z_k)_{k=0}^{\infty}\in \ell^2(\R^{n})$.

    (ii) Note that if $z_{k+1}=Zz_{k}+Ss_k$, then $z_k=Z^kz_0+\sum_{\tau=0}^{k-1}Z^{k-1-\tau}Ss_{\tau}$. Since $Z$ is Schur stable, we have $\normx{z_k}\leq C\rho^k\normx{z_0}+C\normx{S}\sum_{\tau=0}^{k-1}\rho^{k-1-\tau}\normx{s_\tau}$. Squaring both sides and using the inequality $(a+b)^2\leq 2(a^2+b^2)$, we have $\normx{z_k}^2\leq 2C^2\normx{z_0}^2\rho^{2k}+2C^2\normx{S}^2\big(\sum_{\tau=0}^{k-1}\rho^{k-1-\tau}\normx{s_\tau}\big)^2$. But, by Cauchy–Schwarz inequality
    \begin{align*}
        &\big(\sum_{\tau=0}^{k-1}\rho^{k-1-\tau}\normx{s_\tau}\big)^2\leq \big(\sum_{\tau=0}^{k-1}\rho^{k-1-\tau}\big)\big(\sum_{\tau=0}^{k-1}\rho^{k-1-\tau}\normx{s_\tau}^2\big)\\
        &\leq \tfrac{1}{(1-\rho)}\sum_{\tau=0}^{k-1}\rho^{k-1-\tau}\normx{s_\tau}^2~(\text{as}~\sum_{\tau=0}^{k-1}\rho^{k-1-\tau}\leq \tfrac{1}{(1-\rho)}).
    \end{align*} 
    Therefore, we have
    \begin{align*}
        \sum_{k=0}^{\infty}\normx{z_k}^2&\leq  2C^2\normx{z_0}^2\sum_{k=0}^{\infty}\rho^{2k}+\tfrac{2C^2\normx{S}^2}{(1-\rho)}\sum_{k=0}^{\infty}\sum_{\tau=0}^{k-1}\rho^{k-1-\tau}\normx{s_\tau}^2\\
        &=  \tfrac{2C^2\normx{z_0}^2}{(1-\rho^2)}+\tfrac{2C^2\normx{S}^2}{(1-\rho)}\sum_{\tau=0}^{\infty}\normx{s_\tau}^2\sum_{k=\tau+1}^{\infty}\rho^{k-1-\tau}\\
        &= \tfrac{2C^2\normx{z_0}^2}{(1-\rho^2)}+\tfrac{2C^2\normx{S}^2}{(1-\rho)^2}\sum_{\tau=0}^{\infty}\normx{s_\tau}^2<\infty
    \end{align*}
    The last inequality follows as $(s_k)_{k=0}^{\infty}\in \ell^2(\R^{m_i})$. Therefore, $(z_k)_{k=0}^{\infty}\in \ell^2(\R^{n})$.
\end{proof}
\begin{lemma}\label{lem:L0}
Let $z_k=Zz_{k+1}+Ss_k$, $z_k \to 0$, $Z\in\R^{n\times n}$ is Schur stable and $(s_k)_{k=0}^{\infty}\in \ell^2(\R^{m_i})$, then $(z_k)_{k=0}^{\infty}\in \ell^2(\R^{n})$.
\end{lemma}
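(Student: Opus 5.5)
Iterating the backward recursion $z_k=Zz_{k+1}+Ss_k$ forward from index $k$ to $k+T$ gives
\begin{align*}
z_k=Z^{T}z_{k+T}+\sum_{\tau=0}^{T-1}Z^{\tau}Ss_{k+\tau},\qquad T\in\mathbb{N}.
\end{align*}
The plan is to pass to the limit $T\to\infty$ so that $z_k$ is written as an anticausal convolution of $(s_k)$ with the geometrically decaying kernel $Z^\tau S$. We then bound the $\ell^2$ norm exactly as in the proof of Lemma \ref{lem:L00}(ii), with the roles of past and future reversed.

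\textbf{Step 1 (the limit).} Schur stability of $Z$ gives constants $0<C<\infty$ and $0\le\rho<1$ with $\normx{Z^\tau}\le C\rho^\tau$. Since $z_k\to 0$, the sequence is bounded, and
\begin{align*}
\normx{Z^Tz_{k+T}}\le C\rho^T\sup_j\normx{z_j}\to 0 .
\end{align*}
This is the only place the hypothesis $z_k\to0$ enters, and it is the key point: without it the homogeneous part $Z^{-k}$-type solutions, which grow when inverted, would not be excluded. The series also converges absolutely, because $s_k\in\ell^2$ implies $\normx{s_j}\le \normx{s}_{\ell^2}$. We therefore obtain
\begin{align*}
z_k=\sum_{\tau=0}^{\infty}Z^\tau S s_{k+\tau}.
\end{align*}

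\textbf{Step 2 (the bound).} We have $\normx{z_k}\le C\normx{S}\sum_{\tau\ge0}\rho^\tau\normx{s_{k+\tau}}$. Applying Cauchy–Schwarz with the weights $\rho^\tau$ gives
\begin{align*}
\normx{z_k}^2\le \tfrac{C^2\normx{S}^2}{1-\rho}\sum_{\tau\ge0}\rho^\tau\normx{s_{k+\tau}}^2 .
\end{align*}
Summing over $k$ and exchanging the order of summation (Tonelli, since all terms are nonnegative), each $\normx{s_j}^2$ is collected with total weight $\sum_{\tau=0}^{j}\rho^\tau\le 1/(1-\rho)$. Hence
\begin{align*}
\sum_k\normx{z_k}^2\le \tfrac{C^2\normx{S}^2}{(1-\rho)^2}\sum_j\normx{s_j}^2<\infty ,
\end{align*}
which shows $(z_k)_{k=0}^{\infty}\in\ell^2(\R^n)$.

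The main obstacle is Step 1, namely justifying the passage to the infinite series. This is handled by the boundedness that follows from $z_k\to0$ together with the geometric decay of $Z^T$. The rest is routine.
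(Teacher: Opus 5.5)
Your proposal is correct and follows essentially the same route as the paper. Both write $z_k=\sum_{\tau\ge 0}Z^{\tau}Ss_{k+\tau}$, apply Cauchy--Schwarz with weights $\rho^{\tau}$, and exchange the order of summation to reach the bound $\tfrac{C^2\normx{S}^2}{(1-\rho)^2}\sum_k\normx{s_k}^2$. Your Step 1, showing $Z^{T}z_{k+T}\to 0$, supplies the justification for the series representation, which the paper only asserts.
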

\begin{proof}
    Due to Schur stability of $Z$, there exist some $0<C<\infty$ and $0 \leq \rho <1$, such that $\normx{Z^{\tau}}\leq C\rho^{\tau}$. As $z_k \to 0$ and $Z$ is Schur stable, we have $z_k= \sum_{\tau=0}^{\infty}Z^{\tau}Ss_{k+\tau}$ (the series on the right-hand side is well-defined because it converges to $z_k$) and thus $\normx{z_k}\leq \sum_{\tau=0}^{\infty}\normx{Z^\tau}\normx{S}\normx{s_{k+\tau}}$ $\leq C\normx{S}\sum_{\tau=0}^{\infty}\rho^{\tau}\normx{s_{k+\tau}}$ (using $\normx{Z^{\tau}}\leq C\rho^{\tau}$). But, by Cauchy–Schwarz inequality $(\sum_{\tau=0}^{\infty}\rho^{\tau}\normx{s_{k+\tau}})^2\leq(\sum_{\tau=0}^{\infty}\rho^{\tau})(\sum_{\tau=0}^{\infty}\rho^{\tau}\normx{s_{k+\tau}}^2)=\tfrac{1}{(1-\rho)}(\sum_{\tau=0}^{\infty}\rho^{\tau}\normx{s_{k+\tau}}^2)$. Therefore, we have
    \begin{align*}
        \sum_{k=0}^{\infty}\normx{z_k}^2&\leq \tfrac{C^2\normx{S}^2}{(1-\rho)}\sum_{k=0}^{\infty}\sum_{\tau=0}^{\infty}\rho^{\tau}\normx{s_{k+\tau}}^2\\
        &=\tfrac{C^2\normx{S}^2}{(1-\rho)}\sum_{\tau=0}^{\infty}\rho^{\tau}\sum_{k=0}^{\infty}\normx{s_{k+\tau}}^2\\
        &\leq\tfrac{C^2\normx{S}^2}{(1-\rho)}\sum_{\tau=0}^{\infty}\rho^{\tau}\sum_{k=0}^{\infty}\normx{s_{k}}^2\\
        &=\tfrac{C^2\normx{S}^2}{(1-\rho)^2}\sum_{k=0}^{\infty}\normx{s_{k}}^2< \infty~(\text{as}~(s_k)_{k=0}^{\infty}\in \ell^2(\R^{m_i}))
    \end{align*}
    So $(z_k)_{k=0}^{\infty}\in \ell^2(\R^{n})$.
\end{proof}
\begin{lemma}\label{lem:L1}
   Let Assumptions \ref{ass:G1} and \ref{ass:G2} hold. Let $x_0\in\X_0$ and for a given $\bu^{-i}\in\U^{-i}(x_0)$ there exist multipliers $(\mu_k^i)_{k=0}^{\infty}\in \ell^2(\R^{c_i}_{+})$ satisfying the following discrete time linear complementarity system 
\begin{subequations}\label{eq:LCS_1}
\begin{align}
    &0 \leq (M^i-N^{ii}L^i)x_k-N^{ii}b_k^i+v_k^{i}+r^i\perp \mu_k^{i}\geq 0, \label{eq:MU1}\\
    &x_{k+1}=(A-B^iL^i)x_k-B^ib_k^i+w_k^{i}, \label{eq:X1}\\
    &e_k^i=A^{\prime}(e_{k+1}^{i}+E^iw_k^{i})-A^{\prime}E^{i}B^ib_k^i-M^{i\prime}\mu_k^{i},\label{eq:eeq}\\
    &Y^ib_k^i=B^{i\prime}(e_{k+1}^i+E^iw_k^{i})-{N^{ii}}^{\prime}\mu_k^{i},\label{eq:beq}
\end{align}
\end{subequations}
for all $k\in\mathbb{N}_0$ and $e_{k}^i \to 0$, where $L^i=(Y^i)^{-1}B^{i\prime}E^iA$, $w_k^{i}=\sum_{j\in-i}B^ju_k^{j}$ and $v_k^{i}=\sum_{j\in-i}N^{ij}u_k^{j}$. If $\bar{\bu}^{i}=(\bar{u}_k^{i})_{k=0}^{\infty}$, with
 \begin{align}
    \bar{u}_k^{i}=-L^ix_k-b_k^i.\label{eq:U1}
\end{align}
then $(e_k^{i})_{k=0}^{\infty}\in \ell^2(\R^{n})$,  $(x_k)_{k=0}^{\infty}\in \ell^2(\R^n)$, $(\bar{u}_k^{i})_{k=0}^{\infty}\in \ell^2(\R^{m_i})$.
\end{lemma}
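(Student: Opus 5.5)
The plan is to treat \eqref{eq:LCS_1} as a cascade. The costate-like variable $e^i$ obeys a backward recursion driven by square-summable inputs. The state $x$ obeys a forward recursion driven by square-summable inputs. Lemmas \ref{lem:L0} and \ref{lem:L00}(ii) then give the $\ell^2$ conclusions in that order.

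\emph{Step 1: square-summability of the other players' signals.} Since $\bu^{-i}\in\U^{-i}(x_0)\subset\Uppi_{j\in-i}\ell^2(\R^{m_j})$, the sequences $w_k^i=\sum_{j\in-i}B^ju_k^j$ and $v_k^i$ are finite linear combinations of square-summable sequences. Hence $(w_k^i),(v_k^i)$ are square-summable, and by hypothesis so is $(\mu_k^i)$.

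\emph{Step 2: eliminate $b_k^i$ from the $e^i$-recursion.} Using $Y^i\succ 0$ (Remark \ref{rem:R1}), \eqref{eq:beq} gives
$$b_k^i=(Y^i)^{-1}\big(B^{i\prime}(e_{k+1}^i+E^iw_k^i)-N^{ii\prime}\mu_k^i\big).$$
Substituting into \eqref{eq:eeq} yields
$$e_k^i=A'\big(I-E^iB^i(Y^i)^{-1}B^{i\prime}\big)(e_{k+1}^i+E^iw_k^i)+A'E^iB^i(Y^i)^{-1}N^{ii\prime}\mu_k^i-M^{i\prime}\mu_k^i.$$
The key observation is that
$$Z:=A'\big(I-E^iB^i(Y^i)^{-1}B^{i\prime}\big)=\big((I-B^i(Y^i)^{-1}B^{i\prime}E^i)A\big)'=(A-B^iL^i)',$$
which uses the symmetry of $E^i$ and $Y^i$. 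By Remark \ref{rem:R1}, $A-B^iL^i$ is Schur stable, and a transpose has the same spectrum, so $Z$ is Schur stable. The recursion therefore has the form $e_k^i=Ze_{k+1}^i+Ss_k$ with $s_k=\col{w_k^i,\mu_k^i}\in\ell^2$ and a suitable constant $S$. Together with $e_k^i\to 0$, Lemma \ref{lem:L0} gives $(e_k^i)\in\ell^2(\R^n)$. Because $(e^i_k)$ is square-summable, so is its shift $(e^i_{k+1})$. The formula for $b_k^i$ is linear in $e_{k+1}^i,w_k^i,\mu_k^i$, so $(b_k^i)$ is also square-summable.

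\emph{Step 3: state and control.} Equation \eqref{eq:X1} reads $x_{k+1}=(A-B^iL^i)x_k+[-B^i~I]\col{b_k^i,w_k^i}$. Here the matrix is Schur stable and the input is square-summable, so Lemma \ref{lem:L00}(ii) gives $(x_k)\in\ell^2(\R^n)$. Finally, $\bar u_k^i=-L^ix_k-b_k^i$ is a linear combination of square-summable sequences, which proves the last claim.

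The main obstacle is the algebraic identification of the backward-recursion matrix with $(A-B^iL^i)'$ in Step 2. This identity is what makes Lemma \ref{lem:L0} applicable. I would verify it carefully, expanding $L^i$ and using $E^i=E^{i\prime}$ and $Y^i=Y^{i\prime}$. The complementarity condition \eqref{eq:MU1} is not needed for these summability claims.
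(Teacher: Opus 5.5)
Your proposal is correct and follows essentially the same route as the paper. You eliminate $b_k^i$ via \eqref{eq:beq} to rewrite \eqref{eq:eeq} as $e_k^i=(A-B^iL^i)'e_{k+1}^i+s_k^i$ with square-summable forcing, apply Lemma \ref{lem:L0} to get $e^i\in\ell^2$, and then obtain $b^i$, $x$ (via Lemma \ref{lem:L00}(ii)) and $\bar{u}^i$ in turn; the paper additionally shows $b_k^i\to 0$ and $x_k\to 0$ first, but that step is not needed for the stated $\ell^2$ conclusions.
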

\begin{proof}
    Since $\bu^{-i}\in\U^{-i}(x_0)$, from \eqref{eq:Uj}, we have $\bu^j\in \ell^2(\R^{m_j})$, $\forall j\in\N\setminus\{i\}$ and $w_k^i \to 0$ and $v_k^i \to 0$. In addition, $e_{k}^i \to 0$ and $\mu_k^{i} \to 0$ (as $(\mu_k^{i})_{k=0}^{\infty}\in \ell^2(\R^{c_i}_{+})$). So, from \eqref{eq:beq}, we observe that $b_k^i \to 0$ (as $Y^i\succ 0$ by Remark \ref{rem:R1}). From Remark \ref{rem:R1}, the matrix $A-B^iL^i$ is Schur stable. Thus, from \eqref{eq:X1}, for any $x_0 \in \X_0$, we have $x_k \to 0$ as $b_k^i \to 0$ and $w_k^i \to 0$.

    Next, we show $(e_k^{i})_{k=0}^{\infty}\in \ell^2(\R^{n})$. For this, using the definition \eqref{eq:beq} of $b_k^i$, we rewrite \eqref{eq:eeq} as $e_k^i=(A-B^iL^i)^{\prime}e_{k+1}^{i}+s_k^i$, where $s_k^i=(A-B^iL^i)^{\prime}E^iw_k^{i}+(N^{ii}L^{i}-M^i)^{\prime}\mu_k^i$. Then, from Lemma \ref{lem:L0}, we have $(e_k^{i})_{k=0}^{\infty}\in \ell^2(\R^{n})$, as $A-B^iL^i$ is Schur stable (see Remark \ref{rem:R1}), $(s_k^{i})_{k=0}^{\infty}\in \ell^2(\R^n)$ and $e_{k}^i \to 0$. From \eqref{eq:beq}, this also implies $(b_k^{i})_{k=0}^{\infty}\in \ell^2(\R^{m_i})$. Furthermore, using Lemma \ref{lem:L00}.(ii), from \eqref{eq:X1}, we also have $(x_k)_{k=0}^{\infty}\in \ell^2(\R^n)$ as $A-B^iL^i$ Schur stable and $(b_k^{i})_{k=0}^{\infty}\in \ell^2(\R^{m_i})$, $(w_k^{i})_{k=0}^{\infty}\in \ell^2(\R^n)$. Finally, from the definition of $\bar{u}_k^{i}$ in \eqref{eq:U1}, $(\bar{u}_k^{i})_{k=0}^{\infty}\in \ell^2(\R^{m_i})$ as both $(x_k)_{k=0}^{\infty}\in \ell^2(\R^n)$ and $(b_k^{i})_{k=0}^{\infty}\in \ell^2(\R^{m_i})$.
\end{proof}
\begin{lemma}\label{lem:finite_sum}
    If the hypothesis of Lemma \ref{lem:L1} hold, then for any admissible control $\bu^i\in \U^i(\bu^{-i};x_0)$, we have $\sum_{k=0}^{\infty}{\mu_k^{i}}^{\prime}\big(M^ix_k+N^{ii}u_k^{i}+v_k^{i}+r^i\big)<\infty$, where the state evolves as $x_{k+1}=Ax_k+B^iu_k^i+w_k^i$.
\end{lemma}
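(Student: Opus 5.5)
The idea is to split the summand into a part that is linear in square-summable sequences, handled by Cauchy–Schwarz, and the constant part ${\mu_k^i}^{\prime}r^i$. The constant part is the delicate one, because $(\mu_k^i)\in\ell^2$ alone does not give $\sum_k{\mu_k^i}^{\prime}r^i<\infty$. I will control it using the complementarity condition \eqref{eq:MU1} of the LCS \eqref{eq:LCS_1}.

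\textbf{Step 1 (nonnegativity).} Let $\bu^i\in\U^i(\bu^{-i};x_0)$ and let $(x_k)$ be the state generated by $x_{k+1}=Ax_k+B^iu_k^i+w_k^i$. By \eqref{eq:AdmissibleSet}, $(\bu^i,\bu^{-i})\in\mathsf R(x_0)$, so $g_k:=M^ix_k+N^{ii}u_k^i+v_k^i+r^i\ge 0$ for every $k$. Since also $\mu_k^i\ge0$, every term ${\mu_k^i}^{\prime}g_k$ is nonnegative. The series therefore either converges or diverges to $+\infty$, and it suffices to show that its partial sums are bounded.

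\textbf{Step 2 ($\ell^2$ state along the admissible control).} By \eqref{eq:AdmissibleSet} and \eqref{eq:CConstraint}, $\bu^i\in\ell^2$, $(w_k^i),(v_k^i)\in\ell^2$, and $J^i<\infty$. Because every term of $J^i$ is nonnegative, finiteness of $J^i$ gives $\sum_k x_k^{\prime}Q^ix_k<\infty$, i.e.\ $((Q^i)^{1/2}x_k)\in\ell^2$.

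By detectability of $(A,Q^i)$ (Assumption \ref{ass:G1}(iii)), there is a matrix $H$ such that $A-H(Q^i)^{1/2}$ is Schur stable. I rewrite the dynamics as
\[
x_{k+1}=(A-H(Q^i)^{1/2})x_k+H(Q^i)^{1/2}x_k+B^iu_k^i+w_k^i .
\]
The forcing terms are all in $\ell^2$, so Lemma \ref{lem:L00}(ii) gives $(x_k)\in\ell^2(\R^n)$. It follows that $h_k:=M^ix_k+N^{ii}u_k^i+v_k^i$ is in $\ell^2$, and by Cauchy–Schwarz $\sum_k|{\mu_k^i}^{\prime}h_k|\le\|\mu^i\|_{\ell^2}\|h\|_{\ell^2}<\infty$.

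\textbf{Step 3 (the constant term, the main obstacle).} Let $(\hat x_k)$ and $(b_k^i)$ denote the state and the auxiliary sequence of the LCS \eqref{eq:LCS_1}, which is driven by the same $\bu^{-i}$. By Lemma \ref{lem:L1}, $(\hat x_k)$ and $(b_k^i)$ are in $\ell^2$. The complementarity in \eqref{eq:MU1} gives, for every $k$,
\[
{\mu_k^i}^{\prime}r^i=-{\mu_k^i}^{\prime}\big((M^i-N^{ii}L^i)\hat x_k-N^{ii}b_k^i+v_k^i\big).
\]
The bracket on the right is an $\ell^2$ sequence. Cauchy–Schwarz therefore shows that $\sum_k {\mu_k^i}^{\prime}r^i$ converges absolutely.

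\textbf{Step 4 (conclusion).} Combining Steps 2 and 3, $\sum_k{\mu_k^i}^{\prime}g_k=\sum_k{\mu_k^i}^{\prime}h_k+\sum_k{\mu_k^i}^{\prime}r^i$ is finite, and by Step 1 it is also nonnegative.
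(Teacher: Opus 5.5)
Your proposal is correct and follows essentially the same route as the paper: you prove square-summability of the admissible-control state via detectability and Lemma~\ref{lem:L00}(ii), use the complementarity condition \eqref{eq:MU1} of the auxiliary LCS (whose state differs from the admissible one) to write the ${\mu_k^i}^{\prime}r^i$ part as $\mu_k^i$ paired with an $\ell^2$ sequence, and finish with Cauchy--Schwarz. The paper differs only in grouping $v_k^i$ with $r^i$ instead of with $M^ix_k+N^{ii}u_k^i$, and your Step~1 nonnegativity remark is extra but harmless.

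One small fix is needed in Step~2. The stage terms of $J^i$ need not all be nonnegative, because $R^{ij}$ for $j\neq i$ is not assumed positive semidefinite. Deduce $\sum_k x_k^{\prime}Q^ix_k<\infty$ instead from $J^i<\infty$ together with the absolute summability of $\sum_k u_k^{j\prime}R^{ij}u_k^j$, which holds because every $\bu^j$ is square-summable.
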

\begin{proof}
First, we show $(x_k)_{k=0}^{\infty}\in \ell^2(\R^{n})$. For any $\bu^i\in \U^i(\bu^{-i};x_0)$, \eqref{eq:AdmissibleSet} implies that $\bu^j\in \ell^2(\R^{m_j})$, $\forall j\in\N$ and $J^i(x_0,(\bu^i,\bu^{-i})) <\infty$. Hence, by Assumption \ref{ass:G1}.(ii), $\sum_{k=0}^{\infty}x_k^{\prime}Q^ix_k<\infty$ i.e., $((Q^i)^{1/2}x_k)_{k=0}^{\infty}\in \ell^2(\R^{n})$. Moreover, $(w_k^i)_{k=0}^{\infty}\in \ell^2(\R^{n})$ as $w_k^{i}=\sum_{j\in-i}B^ju_k^{j}$. By detectability of $(A,Q^i)$, there exists a matrix $F\in\R^{n\times n}$, such that $A-F(Q^i)^{1/2}$ is Schur stable. Next, we express the state dynamics as $x_{k+1}=(A-F(Q^i)^{1/2})x_k+s_k$ where $s_k=F(Q^i)^{1/2}x_k+B^iu_k^i+w_k^i$. From the above, note that $(s_k)_{k=0}^{\infty}\in \ell^2(\R^{n})$. So by Lemma \ref{lem:L00}.(ii), $(x_k)_{k=0}^{\infty}\in \ell^2(\R^{n})$. 

Next, define $y_k=M^ix_k+N^{ii}u_k^{i}$. Then, from the above, $(y_k)_{k=0}^{\infty}\in \ell^2(\R^{c_i})$.
Moreover, from the complementarity condition \eqref{eq:MU1}, we have ${\mu_k^{i}}^{\prime}(-\bar{y}_k+v_k^{i}+r^i)=0$ or ${\mu_k^{i}}^{\prime}(v_k^{i}+r^i)={\mu_k^{i}}^{\prime}\bar{y}_k$, where $\bar{y}_k=N^{ii}b_k^i-(M^i-N^{ii}L^i)x_k$. Also, by Lemma \ref{lem:L1}, $(\mu_k^i)_{k=0}^{\infty}\in \ell^2(\R^{c_i}_{+})$ and $(\bar{y}_k)_{k=0}^{\infty}\in \ell^2(\R^{c_i})$ (note that the state $x_k$ in Lemma \ref{lem:L1}, generated by player $i$'s control \eqref{eq:U1}, differs from the state considered here, which is generated by an arbitrary control $\bu^i\in \U^i(\bu^{-i};x_0)$). 
Finally, the result follows by Cauchy–Schwarz inequality as $\sum_{k=0}^{\infty}{\mu_k^{i}}^{\prime}\big(M^ix_k+N^{ii}u_k^{i}+v_k^{i}+r^i\big)=\sum_{k=0}^{\infty}\big({\mu_k^{i}}^{\prime}y_k+{\mu_k^{i}}^{\prime}\bar{y}_k\big) \leq \sum_{k=0}^{\infty}\normx{\mu_k^{i}}\normx{y_k}+\sum_{k=0}^{\infty}\normx{\mu_k^{i}}\normx{\bar{y}_k}\leq \big(\sum_{k=0}^{\infty}\normx{\mu_k^i}^2\big)^{1/2}\big(\sum_{k=0}^{\infty}\normx{y_k}^2\big)^{1/2}+\big(\sum_{k=0}^{\infty}\normx{\mu_k^i}^2\big)^{1/2}\big(\sum_{k=0}^{\infty}\normx{\bar{y}_k}^2\big)^{1/2}<\infty$.
\end{proof}
The next theorem shows that, the controls $\bar{\bu}^{i}$ in \eqref{eq:U1}, are best responses of player $i$ to a given $\bu^{-i}\in\U^{-i}(x_0)$ of other players.
\begin{theorem}\label{thm:T1}
Let the conditions of Lemma \ref{lem:L1} hold. Then, the controls in \eqref{eq:U1} satisfy $\bar{\bu}^{i}\in \U^i(\bu^{-i}; x_0)$ and are best responses of player $i$ to a given $\bu^{-i}\in\U^{-i}(x_0)$ of all other players. Moreover, the optimal cost of player $i$ is $\tfrac{1}{2}x_0^{\prime}E^ix_0+e_0^{i\prime}x_0+f_0^i$,
where $f_k^i \to 0$ and
  \begin{align}
        f_k^i&=f_{k+1}^i+{w_k^{i}}^{\prime}e_{k+1}^{i}+\tfrac{1}{2}{w_k^{i}}^{\prime}E^iw_k^{i}+\tfrac{1}{2}\sum_{j\in-i}{u_k^{j}}^{\prime}R^{ij} u_k^{j}\nonumber\\
        &\quad-\tfrac{1}{2}b_k^{i\prime}Y^ib_k^i-{\mu_k^{i}}^{\prime}\big(v_k^{i}+r^i\big),~k\in\mathbb{N}_0. \label{eq:feq} 
  \end{align}
\end{theorem}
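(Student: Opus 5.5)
We will use a completion-of-squares argument with the value-function candidate $V_k(x)=\tfrac12 x'E^ix+e_k^{i\prime}x+f_k^i$, combined with a Lagrangian penalty that accounts for the coupled constraint.

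\textbf{Admissibility of $\bar{\bu}^i$.} By Lemma \ref{lem:L1}, $(x_k)$ and $(\bar u^i_k)$ are square summable, and $x_k\to 0$. The complementarity condition \eqref{eq:MU1}, together with \eqref{eq:U1}, says exactly that $M^ix_k+N^{ii}\bar u^i_k+v^i_k+r^i\ge 0$. Hence the constraint holds for all $k$, and $(\bar{\bu}^i,\bu^{-i})\in\mathsf R(x_0)$. Finiteness of $J^i$ follows from $Q^i$, $R^{ij}$ being constant matrices and all sequences being in $\ell^2$. So $\bar{\bu}^i\in\U^i(\bu^{-i};x_0)$.

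\textbf{Well-posedness of $f$.} We define $f_k^i$ backward by summing \eqref{eq:feq} from $k$ to $\infty$. Each term is summable:
\begin{itemize}
\item $w'e$, $w'Ew$, $u^{j\prime}R^{ij}u^j$ and $b'Yb$ by Cauchy--Schwarz with the $\ell^2$ bounds from Lemma \ref{lem:L1};
\item $\mu'(v+r)$ because complementarity gives $\mu_k'(v_k+r)=\mu_k'\bar y_k$ with $\bar y\in\ell^2$, as in Lemma \ref{lem:finite_sum}.
\end{itemize}
Thus $f_k^i$ is well defined and $f_k^i\to0$.

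\textbf{Key identity.} Take any admissible $\bu^i$ with state $x_{k+1}=Ax_k+B^iu_k^i+w_k^i$. We compute
\[
\tfrac12\Big(x_k'Q^ix_k+u_k^{i\prime}R^{ii}u_k^i+\textstyle\sum_{j\in-i}u_k^{j\prime}R^{ij}u_k^j\Big)+V_{k+1}(x_{k+1})-V_k(x_k).
\]
Expand $V_{k+1}(x_{k+1})$, use the Riccati equation \eqref{eq:Eeq} to cancel the quadratic-in-$x$ terms, and use \eqref{eq:eeq} and \eqref{eq:feq} for the linear and constant terms. With \eqref{eq:beq} substituted, we expect the expression to reduce to
\[
\tfrac12 (u_k^i+L^ix_k+b_k^i)'Y^i(u_k^i+L^ix_k+b_k^i)-\mu_k^{i\prime}\big(M^ix_k+N^{ii}u_k^i+v_k^i+r^i\big).
\]
The cross terms $\mu'(M^ix+N^{ii}u)$ arise from the $-M^{i\prime}\mu$ term in \eqref{eq:eeq} and the $-N^{ii\prime}\mu$ term in \eqref{eq:beq}. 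Checking that this bookkeeping closes exactly is the main computational obstacle; it is routine but sign-sensitive.

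\textbf{Summing.} Sum the identity over $k=0,\dots,T-1$ and let $T\to\infty$. For the telescoping term, $V_T(x_T)\to0$ because $x_T\to0$ (admissibility), $e_T\to0$ and $f_T\to0$. The $\mu$-series converges by Lemma \ref{lem:finite_sum}. This gives
\[
J^i(x_0,(\bu^i,\bu^{-i}))=V_0(x_0)+\tfrac12\sum_k\normx{u_k^i+L^ix_k+b_k^i}_{Y^i}^2-\sum_k\mu_k^{i\prime}(\cdots).
\]
Since $\mu_k^i\ge0$ and the constraint holds for admissible $\bu^i$, the last sum is nonpositive, so $J^i\ge V_0(x_0)$.

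For $\bar{\bu}^i$, the quadratic term vanishes by \eqref{eq:U1}, and the $\mu$-term vanishes by complementarity \eqref{eq:MU1}. Hence $J^i(x_0,(\bar{\bu}^i,\bu^{-i}))=\tfrac12x_0'E^ix_0+e_0^{i\prime}x_0+f_0^i$, which shows both the best-response property and the stated optimal cost. A subtle point we will handle is that the limit of the partial sums of the cost exists: each separate series converges, so passing to the limit termwise is justified.
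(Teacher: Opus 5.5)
Your route is the paper's own: the guess functional $V_k^i(x)=\tfrac12x^{\prime}E^ix+e_k^{i\prime}x+f_k^i$, telescoping along an arbitrary admissible trajectory, completing the square with $Y^i$, and controlling the multiplier series by Lemma~\ref{lem:finite_sum}. The gap is in the key identity and in the inference you draw from it. You predict a \emph{minus} sign in front of $\mu_k^{i\prime}(M^ix_k+N^{ii}u_k^i+v_k^i+r^i)$. You then argue that, since this series is nonnegative for admissible $\bu^i$, $J^i\ge V_0^i(x_0)$. That does not follow. Your identity writes $J^i$ as $V_0^i(x_0)$ plus a nonnegative quadratic series \emph{minus} a nonnegative series, which gives no lower bound by $V_0^i(x_0)$. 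So the best-response inequality is not established as written. Had the sign really been negative, $\bar{\bu}^i$ would generally fail to be optimal, since slackening a constraint with $\mu_k^i>0$ would lower $J^i$ to first order.

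The bookkeeping in fact closes with a plus sign. Write \eqref{eq:beq} as $B^{i\prime}(e_{k+1}^i+E^iw_k^i)=Y^ib_k^i+N^{ii\prime}\mu_k^i$. Then the cross term $u_k^{i\prime}B^{i\prime}(E^iAx_k+E^iw_k^i+e_{k+1}^i)$ equals $u_k^{i\prime}Y^i(L^ix_k+b_k^i)+\mu_k^{i\prime}N^{ii}u_k^i$. After completing the square, using $L^{i\prime}Y^ib_k^i=A^{\prime}E^iB^ib_k^i$, the coefficient of $x_k$ is $A^{\prime}(e_{k+1}^i+E^iw_k^i)-A^{\prime}E^iB^ib_k^i-e_k^i=M^{i\prime}\mu_k^i$ by \eqref{eq:eeq}. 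Finally, \eqref{eq:feq} makes $f_{k+1}^i-f_k^i$ contribute $+\mu_k^{i\prime}(v_k^i+r^i)$. Hence
\begin{align*}
&\tfrac12\Big(x_k^{\prime}Q^ix_k+\sum_{j\in\N}u_k^{j\prime}R^{ij}u_k^j\Big)+V_{k+1}^i(x_{k+1})-V_k^i(x_k)\\
&\quad=\tfrac12(u_k^i+L^ix_k+b_k^i)^{\prime}Y^i(u_k^i+L^ix_k+b_k^i)+\mu_k^{i\prime}\big(M^ix_k+N^{ii}u_k^i+v_k^i+r^i\big),
\end{align*}
and summing gives exactly \eqref{eq:J_cost1}. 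With this sign, both correction series are nonnegative for every admissible $\bu^i$ and vanish at $\bar{\bu}^i$ by \eqref{eq:U1} and \eqref{eq:MU1}, and the rest of your argument goes through.

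Two of your steps are small improvements on the paper. You prove explicitly that $f^i$ is well defined, whereas the paper takes it as given. You also show $J^i(x_0,(\bar{\bu}^i,\bu^{-i}))<\infty$ directly from the $\ell^2$ bounds, whereas the paper deduces it from \eqref{eq:Th1GNE}.

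One point you inherit from the paper: \eqref{eq:MU1} only certifies player $i$'s rows of \eqref{eq:vectorC1}, whereas membership in $\mathsf{R}(x_0)$ requires all rows. This is harmless where the theorem is used (Theorem~\ref{cor:C1}), since there every player's complementarity condition holds at the joint profile.
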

\begin{proof}
The proof is completed using an approach similar to \cite{Partha:2023}, which consists of the following five steps. 
\noindent\textbf{Step 1:} Define a guess functional for player $i$, as $V_k^i(x_k)=\tfrac{1}{2}x_k^{\prime}E^ix_k+e_k^{i\prime}x_k+f_k^i$.\\
\noindent\textbf{Step 2:} We compute the telescopic sum of the guess functional i.e., $\sum_{k=0}^{\infty}\big(V_{k+1}^i(x_{k+1})-V_k^i(x_k)\big)$, using the state dynamics $x_{k+1}=Ax_k+B^iu_k^i+w_k^i$ for an arbitrary admissible control $\bu^i\in \U^i(\bu^{-i};x_0)$.
Note that, we have $f_k^i \to 0$ and from Lemma \ref{lem:L1}, $e_k^i \to 0$. Also, for any admissible control $\bu^i\in \U^i(\bu^{-i};x_0)$ of player $i$ with $\bu^{-i}\in\U^{-i}(x_0)$, it follows from \eqref{eq:AdmissibleSet} that $x_k \to 0$, implying $V_k^i(x_k) \to 0$.
So, the telescopic sum converges and
\begin{align*}
    &-V_0^i(x_0)=\sum_{k=0}^{\infty}\big(V_{k+1}^i(x_{k+1})-V_k^i(x_k)\big)\\
    &=\sum_{k=0}^{\infty}\Big(\tfrac{1}{2}x_{k}^{\prime}(A^{\prime}E^iA-E^i)x_{k}+\tfrac{1}{2}{u_k^i}^{\prime}B^{i\prime}E^iB^iu_k^i+{u_k^i}^{\prime}B^{i\prime}\\
    &\times(E^i(Ax_k+w_k^{i})+e_{k+1}^i)+(A^{\prime}e_{k+1}^{i}+A^{\prime}E^iw_k^{i}-e_k^i)^{\prime}x_k\nonumber\\
    &+{w_k^{i}}^{\prime}e_{k+1}^{i}+\tfrac{1}{2}{w_k^{i}}^{\prime}E^iw_k^{i}+f_{k+1}^i-f_k^i\Big).
\end{align*}
\noindent\textbf{Step 3:} Using the expression for $V_0^i(x_0)$ derived in Step 2, we compute $J^i(x_0,(\bu^i,\bu^{-i}))-V_0^i(x_0)$ as follows:
\begin{align*}
    &J^i(x_0, (\bu^i,\bu^{-i}))-V_0^i(x_0)=\sum_{k=0}^{\infty}\Big(\tfrac{1}{2}x_{k}^{\prime}\big(Q^i+A^{\prime}E^iA-E^i\big)x_{k}\\
    &+\tfrac{1}{2}{u_k^i}^{\prime}Y^iu_k^i+{u_k^i}^{\prime}B^{i\prime}\big(E^i\big(Ax_k+w_k^{i}\big)+e_{k+1}^i\big)\nonumber\\
    &+\big(A^{\prime}e_{k+1}^{i}+A^{\prime}E^iw_k^{i}-e_k^i\big)^{\prime}x_k+\tfrac{1}{2}\sum_{j\in-i}{u_k^{j}}^{\prime}R^{ij} u_k^{j}\\
    &+\tfrac{1}{2}{w_k^{i}}^{\prime}E^iw_k^{i}+{w_k^{i}}^{\prime}e_{k+1}^{i}+f_{k+1}^i-f_k^i\Big).
\end{align*}
\noindent\textbf{Step 4:} By Lemma \ref{lem:finite_sum},  $\sum_{k=0}^{\infty}{\mu_k^{i}}^{\prime}\big(M^ix_k+N^{ii}u_k^{i}+v_k^{i}+r^i\big)<\infty$ for any $\bu^i\in \U^i(\bu^{-i};x_0)$. Thus, we can add and subtract the terms $\sum_{k=0}^{\infty}{\mu_k^{i}}^{\prime}\big(M^ix_k+N^{ii}u_k^{i}+v_k^{i}+r^i\big)$ to the above computed expression  $J^i(x_0, (\bu^i,\bu^{-i}))-V_0^i(x_0)$ to incorporate inequality constraints. Finally, as $Y^i\succ 0$ (Remark \ref{rem:R1}), we do completion of square of terms $\tfrac{1}{2}{u_k^i}^{\prime}Y^iu_k^i+{u_k^i}^{\prime}Y^i(L^ix_k+b_k^i)$ to arrive at the following expression.
\begin{align*}
    &J^i(x_0, (\bu^i,\bu^{-i}))-V_0^i(x_0)\\
    &=\tfrac{1}{2}\sum_{k=0}^{\infty}x_{k}^{\prime}\big(Q^i+A^{\prime}E^iA-A^{\prime}E^{i}B^i(Y^i)^{-1}B^{i\prime}E^iA-E^i\big)x_{k}\\
    &+\sum_{k=0}^{\infty}\big(A^{\prime}e_{k+1}^{i}+A^{\prime}E^iw_k^{i}-A^{\prime}E^{i}B^ib_k^i-M^{i\prime}\mu_k^{i}-e_k^i\big)^{\prime}x_k\\
    &+\sum_{k=0}^{\infty}\Big(f_{k+1}^i+{w_k^{i}}^{\prime}e_{k+1}^{i}+\tfrac{1}{2}{w_k^{i}}^{\prime}E^iw_k^{i}+\tfrac{1}{2}\sum_{j\in-i}{u_k^{j}}^{\prime}R^{ij} u_k^{j}\\
    &-\tfrac{1}{2}b_k^{i\prime}Y^ib_k^i-{\mu_k^{i}}^{\prime}\big(v_k^{i}+r^i\big)-f_k^i\Big)+\sum_{k=0}^{\infty}{\mu_k^{i}}^{\prime}\big(M^ix_k+N^{ii}u_k^{i}\\
    &+v_k^{i}+r^i\big)+\tfrac{1}{2}\sum_{k=0}^{\infty}\big(u_k^i+L^ix_k+b_k^i\big)^{\prime}Y^i\big(u_k^i+L^ix_k+b_k^i\big).
\end{align*}
Due to \eqref{eq:Eeq}, \eqref{eq:eeq} and \eqref{eq:feq}, the first three summations on the right hand side of the above expression vanish. Thus, we have
\begin{align}
    &J^i(x_0, (\bu^i,\bu^{-i}))=V_0^i(x_0)+\sum_{k=0}^{\infty}{\mu_k^{i}}^{\prime}\big(M^ix_k+N^{ii}u_k^{i}+v_k^{i}\nonumber\\
    &+r^i\big)+\tfrac{1}{2}\sum_{k=0}^{\infty}(u_k^i+L^ix_k+b_k^i)^{\prime}Y^i(u_k^i+L^ix_k+b_k^i).\label{eq:J_cost1}
\end{align}
\noindent\textbf{Step 5:}  
Substituting $\bar{u}_k^{i}$ given by \eqref{eq:U1} for $u_k^i$ in \eqref{eq:J_cost1}, the last summation on the right hand side of \eqref{eq:J_cost1} vanishes. Also, by the complementarity conditions \eqref{eq:MU1}, the second summation  in \eqref{eq:J_cost1} is zero for $\bar{u}_k^{i}$. So, with $\bar{\bu}^{i}=(\bar{u}_k^{i})_{k=0}^{\infty}$, we have
\begin{align}
    J^i(x_0, (\bar{\bu}^{i},\bu^{-i}))&=V_0^i(x_0).\label{eq:J_cost2}
\end{align}
By Remark \ref{rem:R1}, $Y^i\succ 0$ and by \eqref{eq:MU1}, $\mu_k^i\geq 0$. So, for all admissible $u_k^i$ i.e., $\forall \bu^i\in 	\U^i(\bu^{-i}; x_0)$, both summations in \eqref{eq:J_cost1} are non-negative (as $M^ix_k+N^{ii}u_k^{i}+v_k^{i}+r^i\geq 0$ for all admissible $u_k^i$). Thus, comparing \eqref{eq:J_cost1} and \eqref{eq:J_cost2}, for all $\bu^i\in 	\U^i(\bu^{-i}; x_0)$, we have
\begin{align}
    J^i(x_0, (\bar{\bu}^{i},\bu^{-i}))\leq  J^i(x_0, (\bu^i,\bu^{-i})).\label{eq:Th1GNE}
\end{align} 
Then the only thing left to show is $\bar{\bu}^{i}\in \U^i(\bu^{-i}; x_0)$. By Lemma \ref{lem:L1}, $(\bar{u}_k^{i})_{k=0}^{\infty}\in \ell^2(\R^{m_i})$ and $x_k\to 0$. Also, by the first inequality in \eqref{eq:MU1}, $(\bar{\bu}^{i},\bu^{-i})\in \mathsf{R}(x_0)$. In addition, from \eqref{eq:AdmissibleSet}, $J^i(x_0, (\bu^i,\bu^{-i})) < \infty$, $\forall \bu^i\in \U^i(\bu^{-i}; x_0)$. So, from \eqref{eq:Th1GNE}, $J^i(x_0, (\bar{\bu}^{i},\bu^{-i}))< \infty$. Thus, by \eqref{eq:AdmissibleSet}, $\bar{\bu}^{i}\in \U^i(\bu^{-i}; x_0)$.
Since  \eqref{eq:Th1GNE} holds for $\forall \bu^i\in 	\U^i(\bu^{-i}; x_0)$,
$\bar{\bu}^{i}$ in \eqref{eq:U1}, are best response to given $\bu^{-i}\in\U^{-i}(x_0)$ with optimal cost (from \eqref{eq:J_cost2}) $V_0^i(x_0)=\tfrac{1}{2}x_0^{\prime}E^ix_0+e_0^{i\prime}x_0+f_0^i$.
\end{proof}
Next, we
present a sufficient condition for the existence of OL-GNE in IDGC \eqref{eq:Game1}.
\begin{theorem}\label{cor:C1}
    Let Assumptions \ref{ass:G1} and \ref{ass:G2} hold and for a given $x_0\in\X_0$, the following discrete time coupled linear complementarity system
    \begin{subequations}\label{eq:LCS2}
        \begin{align}
    &\bar{x}_{k+1}=(A-\sum_{j\in\N}B^jL^j)\bar{x}_k-\sum_{j\in\N}B^j\bar{b}_k^{j},~\bar{x}_k = x_0,\label{eq:LCS2_E1}\\
    &\bar{e}_k^{i}=A^{\prime}\Big(\bar{e}_{k+1}^{i}-E^i\sum_{j\in-i}B^j(L^j\bar{x}_k+\bar{b}_k^{j})\Big)\nonumber\\
    &\hspace{0.75in}-A^{\prime}E^{i}B^i\bar{b}_k^{i}-M^{i\prime}\bar{\mu}_k^{i},\label{eq:LCS2_E2}\\
    &Y^{i}\bar{b}_k^{i}=B^{i\prime}\Big(\bar{e}_{k+1}^{i}-E^i\sum_{j\in-i}B^j(L^j\bar{x}_k+\bar{b}_k^{j})\Big)-{N^{ii}}^{\prime}\bar{\mu}_k^{i},\label{eq:bstar}\\
    &0 \leq \big(M^i-\sum_{j\in\N}N^{ij}L^j\big)\bar{x}_k-\sum_{j\in\N}N^{ij}\bar{b}_k^{j}+r^i\perp \bar{\mu}_k^{i}\geq 0, \label{eq:LCS2_E3}
    \end{align}
    \end{subequations}
  with $L^i=(Y^i)^{-1}B^{i\prime}E^iA$ is solvable for all $i\in\N$ and $k\in\mathbb{N}_0$. Further, $(\bar{x}_k)_{k=0}^{\infty}\in \ell^2(\R^n)$, $(\bar{e}_k)_{k=0}^{\infty}\in \ell^2(\R^{Nn})$ and $(\bar{\mu}_k)_{k=0}^{\infty}\in \ell^2(\R^{c}_{+})$, where $\bar{e}_k=\col{\bar{e}_k^i}_{i=1}^{N}$ and $\bar{\mu}_k=\col{\bar{\mu}_k^{i}}_{i=1}^{N}$. Then,
    \begin{align}
        \bar{u}_k^{i}&=-L^i\bar{x}_k-\bar{b}_k^{i},~~ i\in \N,~k\in\mathbb{N}_0,\label{eq:U2}
    \end{align}
   is an OL-GNE for IDGC \eqref{eq:Game1}.
\end{theorem}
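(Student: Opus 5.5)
The plan is to reduce Theorem \ref{cor:C1} to Theorem \ref{thm:T1}, applied to each player separately with the other players' strategies fixed at the candidate profile. Set $\bar{u}_k^{j}=-L^j\bar{x}_k-\bar{b}_k^{j}$ for all $j\in\N$, as in \eqref{eq:U2}. For a fixed player $i$, take $\bu^{-i}=\bar{\bu}^{-i}$. Then define $w_k^{i}=\sum_{j\in-i}B^j\bar{u}_k^{j}=-\sum_{j\in-i}B^j(L^j\bar{x}_k+\bar{b}_k^{j})$ and $v_k^{i}=\sum_{j\in-i}N^{ij}\bar{u}_k^{j}$. Finally, set $x_k=\bar{x}_k$, $e_k^i=\bar{e}_k^i$, $b_k^i=\bar{b}_k^i$ and $\mu_k^i=\bar{\mu}_k^i$.

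The first step is to check that these sequences satisfy the player-$i$ LCS \eqref{eq:LCS_1}. The state equation \eqref{eq:LCS2_E1}, rewritten as $\bar{x}_{k+1}=(A-B^iL^i)\bar{x}_k-B^i\bar{b}_k^i+w_k^i$, is exactly \eqref{eq:X1}. Substituting $w_k^i$ turns \eqref{eq:LCS2_E2} and \eqref{eq:bstar} into \eqref{eq:eeq} and \eqref{eq:beq}. Splitting $\sum_{j\in\N}N^{ij}(L^j\bar{x}_k+\bar{b}_k^j)$ into the term $j=i$ and the terms $j\in-i$ turns \eqref{eq:LCS2_E3} into \eqref{eq:MU1}. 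Since $(\bar{\mu}_k)\in\ell^2(\R^c_+)$, its block $(\bar\mu_k^i)$ lies in $\ell^2(\R^{c_i}_+)$. Since $(\bar{e}_k)\in\ell^2$, we get $\bar{e}_k^i\to 0$.

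The second step is to verify the remaining hypotheses of Lemma \ref{lem:L1}, namely $\bar{\bu}^{-i}\in\U^{-i}(x_0)$. This is the step I expect to need the most care.
\begin{itemize}
\item I first show $\bar{b}^j\in\ell^2$ for every $j$. By \eqref{eq:bstar}, and because $Y^i\succ0$, $\bar{b}^i_k$ is a linear combination of $\bar{e}_{k+1}^i$, $\bar{x}_k$, $\bar{\mu}_k^i$ and $\bar{b}_k^j$ for $j\neq i$. Stacking these equations over $i$ gives a block system $\mathcal{Y}\bar{b}_k=(\text{terms in }\ell^2)$. Here $\mathcal{Y}$ has diagonal blocks $Y^i$ and off-diagonal blocks $B^{i\prime}E^iB^j$, which is precisely what solvability of \eqref{eq:LCS2} at each $k$ presupposes.
\item The most direct route is to assume $\mathcal{Y}$ is invertible. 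Then $\bar{b}_k$ is a fixed linear map of $\ell^2$ sequences, so $\bar{b}\in\ell^2$.
\item If I want to avoid that assumption, I would instead apply the argument of Lemma \ref{lem:L1} to each player with $w^i$ treated as given.
\end{itemize}
Once $\bar{b}\in\ell^2$, the controls $\bar{u}^j=-L^j\bar{x}-\bar{b}^j$ belong to $\ell^2(\R^{m_j})$ because $\bar{x}\in\ell^2$. Stacking \eqref{eq:LCS2_E3} over $i$ shows that the joint constraint \eqref{eq:vectorC1} holds along $\bar{x}$ for every $k$. Hence $\bar{\bu}\in\mathsf{R}(x_0)$, and therefore $\bar{\bu}^{-i}\in\U^{-i}(x_0)$ with witness $\bar{\bu}^i$.

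The third step invokes the results already proved. Theorem \ref{thm:T1} shows that $\bar{\bu}^i$, defined by \eqref{eq:U1} with these data, belongs to $\U^i(\bar{\bu}^{-i};x_0)$. It also shows that $\bar{\bu}^i$ satisfies $J^i(x_0,(\bar{\bu}^i,\bar{\bu}^{-i}))\le J^i(x_0,(\bu^i,\bar{\bu}^{-i}))$ for all $\bu^i\in\U^i(\bar{\bu}^{-i};x_0)$. The state generated in Lemma \ref{lem:L1} coincides with $\bar{x}$, because \eqref{eq:X1} and \eqref{eq:LCS2_E1} are the same recursion from the same initial condition $x_0$. Consequently the best response \eqref{eq:U1} equals \eqref{eq:U2}.

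For Theorem \ref{thm:T1} one also needs $f^i$ from \eqref{eq:feq} with $f_k^i\to0$. I would define it as the tail sum $f_k^i=\sum_{t\ge k}(\cdots)$ of the right-hand increments. Each increment is a product or quadratic form of $\ell^2$ sequences, together with $\mu^{i\prime}r^i$. This last term is summable by the complementarity relation, exactly as in Lemma \ref{lem:finite_sum}, so the series converges absolutely and its tails tend to zero. Since the inequality holds for every $i\in\N$, Definition \ref{def:OCNEdef} is satisfied and \eqref{eq:U2} is an OL-GNE.
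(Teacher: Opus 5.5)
\textbf{The gap: showing $\bar b^j\in\ell^2$.} Your route is the paper's. You insert the solution of \eqref{eq:LCS2} into player $i$'s system \eqref{eq:LCS_1} with $w_k^i,v_k^i$ generated by $\bar{\bu}^{-i}$, show $\bar{\bu}\in\mathsf{R}(x_0)$ so that $\bar{\bu}^{-i}\in\U^{-i}(x_0)$, and apply Theorem \ref{thm:T1} for each $i$. Your construction of $f^i$ as an absolutely convergent tail sum is correct, and the paper leaves it implicit. The gap is the step you flagged: proving $\bar b^j\in\ell^2$ for all $j$. Everything downstream rests on it, including $\bar{\bu}^{-i}\in\U^{-i}(x_0)$ and hence the use of Lemma \ref{lem:L1} and Theorem \ref{thm:T1}.

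Your main route assumes $\mathcal{Y}$ is invertible. That is not a hypothesis of the theorem, and solvability of \eqref{eq:LCS2} does not imply it; solvability only needs the right-hand side to lie in the range of $\mathcal{Y}$. Invertibility can genuinely fail. With $R=\oplus_i R^{ii}$ one has $\mathcal{Y}=R+\col{B^{i\prime}E^i}_{i=1}^N\bar B$, so $\det\mathcal{Y}=\det R\,\det\big(I_n+\sum_{i\in\N}B^i(R^{ii})^{-1}B^{i\prime}E^i\big)$, and this can vanish when $N\ge 2$. For example, take $n=2$, $B^1=\col{1,0}$, $B^2=\col{0,1}$, $R^{11}=R^{22}=1$, $E^1=\left[\begin{smallmatrix}1&2\\2&5\end{smallmatrix}\right]$ and $E^2=\left[\begin{smallmatrix}5&2\\2&1\end{smallmatrix}\right]$. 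These give $\mathcal{Y}=\left[\begin{smallmatrix}2&2\\2&2\end{smallmatrix}\right]$. These $E^i$ are the stabilizing solutions of \eqref{eq:Eeq} for $A=\epsilon I$ with $|\epsilon|<1$ and $Q^i\succ 0$ read off from \eqref{eq:Eeq}.

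Your fallback is circular. Lemma \ref{lem:L1}'s argument uses $w^i\in\ell^2$ both to get $s_k^i\in\ell^2$ and in \eqref{eq:beq}. But $w^i\in\ell^2$ means $\sum_{j\in-i}B^j\bar u^j\in\ell^2$, which is exactly what must be proved. The paper handles this step by asserting that the stacked coefficient matrix is positive definite. That holds for $\oplus_iY^i$, but not for the coupled matrix $\mathcal{Y}$ that you correctly wrote down.

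\textbf{The fix.} Invert only $R^{ii}$, and use $\bar x\in\ell^2$ through the state equation. Since $Y^i=R^{ii}+B^{i\prime}E^iB^i$, equation \eqref{eq:bstar} rearranges to
\[
R^{ii}\bar b_k^i=B^{i\prime}\bar e_{k+1}^i-B^{i\prime}E^i\textstyle\sum_{j\in-i}B^jL^j\bar x_k-B^{i\prime}E^i\bar B\bar b_k-N^{ii\prime}\bar\mu_k^i .
\]
Equation \eqref{eq:LCS2_E1} gives $\bar B\bar b_k=(A-\sum_{j\in\N}B^jL^j)\bar x_k-\bar x_{k+1}$, which is square-summable. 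Hence the whole right-hand side is in $\ell^2$, and $\bar b^i\in\ell^2$ for every $i$ with no invertibility assumption.

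Equivalently, the algebra in the proof of Theorem \ref{th:LCSeqv} gives $R^{ii}\bar u_k^i=-B^{i\prime}(E^i\bar x_{k+1}+\bar e_{k+1}^i)+N^{ii\prime}\bar\mu_k^i$, which yields $\bar u^i\in\ell^2$ directly. With this step replaced, the rest of your argument goes through unchanged.
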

\begin{proof}
     Note that $Y^i=R^{ii}+B^{i\prime}E^iB^i$, so by defining $\bar{b}_k=\col{\bar{b}_k^i}_{i=1}^{N}$, \eqref{eq:bstar}, for all $i\in\N$, can be compactly written as
     \begin{align*}
         (R+B^{\prime}EB)\bar{b}_k=B^{\prime}e_{k+1}-B^{\prime}EBL \bar{x}_k-(\otimes_{i=1}^{N}{N^{ii}})^{\prime}\bar{\mu}_k,
     \end{align*}
     where $R=\otimes_{i=1}^{N}R^{ii}$, $B=\otimes_{i=1}^{N}B^i$, $E=\otimes_{i=1}^{N}E^i$ and $L=\col{L^i}_{i=1}^{N}$. Since $R+B^{\prime}EB\succ 0$ ($R^{ii} \succ 0$ by Assumption \ref{ass:G1}.(ii) and $E^i\succ 0$ by Assumption \ref{ass:G2}) and the terms on the right of above equation are square-summable, we have $\bar{b}_k\in \ell^2(\R^{m})$. From \eqref{eq:U2}, this implies $(\bar{u}_k^{i})_{k=0}^{\infty}\in \ell^2(\R^{m_i})$ for each $i \in \N$. Further, due to the first inequality constraints in \eqref{eq:LCS2_E3},  $(\bar{\bu}^{i},\bar{\bu}^{-i})$ satisfies the coupled inequality constraints \eqref{eq:Gconstraints} for all $k\in\mathbb{N}_0$.
     Therefore, $(\bar{\bu}^{i},\bar{\bu}^{-i})\in\mathsf{R}(x_0)$ and by \eqref{eq:Uj}, we have $\bar{\bu}^{-i}\in\U^{-i}(x_0)$. Moreover, by taking $w_k^{i}=\sum_{j\in-i}B^j\bar{u}_k^{j}$ and $v_k^{i}=\sum_{j\in-i}N^{ij}\bar{u}_k^{j}$, we note that \eqref{eq:LCS2}, for player $i\in\N$, are exactly same as \eqref{eq:LCS_1} (in Lemma \ref{lem:L1}), when all players except player $i$ are using $\bar{\bu}^{-i}\in\U^{-i}(x_0)$. So by Theorem \ref{thm:T1}, $\bar{\bu}^{i}\in \U^i(\bar{\bu}^{-i}; x_0)$ and are best response of player $i$ to $\bar{\bu}^{-i}\in\U^{-i}(x_0)$. In particular, from \eqref{eq:Th1GNE}, we have 
     \begin{align}
        J^i(x_0, (\bar{\bu}^{i},\bar{\bu}^{-i}))\leq  J^i(x_0, (\bu^i,\bar{\bu}^{-i})), ~\forall \bu^i\in \U^i(\bar{\bu}^{-i}; x_0).\label{eq:GNEeqTH2}
   \end{align} 
     Since the choice of player $i\in \N$ was arbitrary, the above argument holds for all players $\forall i\in \N$ and thus, by Definition \ref{def:OCNEdef}, \eqref{eq:U2} represents an OL-GNE for IDGC \eqref{eq:Game1}.
\end{proof}
The following theorem relates LCSs \eqref{eq:LCS1} and \eqref{eq:LCS2}.
\begin{theorem}\label{th:LCSeqv}
    Let Assumptions \ref{ass:G1} and \ref{ass:G2} hold and $x_0\in\X_0$. Then the following two statements are equivalent.\\
    (i) LCS \eqref{eq:LCS1} is solvable. $\hspace{0.25in}$(ii) LCS \eqref{eq:LCS2} is solvable.
\end{theorem}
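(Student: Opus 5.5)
The plan is to build an explicit bijection between solutions of \eqref{eq:LCS1} and \eqref{eq:LCS2}, using the same change of variables that underlies Theorem \ref{thm:T1}: the costate of player $i$ is split into a Riccati part and an affine part, $\lambda_k^{i}=E^i x_k+e_k^i$. The state and multipliers are kept unchanged, $\bar{x}_k=x_k^\star$ and $\bar{\mu}_k^i=\mu_k^{i\star}$. The control is identified through
\[
u_k^{i\star}=-L^i x_k^\star-\bar{b}_k^i .
\]
In other words, $\bar{b}_k^i$ is defined as $-u_k^{i\star}-L^i x_k^\star$, with $u_k^{i\star}$ given by \eqref{eq:BarU1}. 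Throughout, $Y^i\succ 0$ (Remark \ref{rem:R1}) and $R^{ii}\succ 0$, so every inversion used below is legitimate.

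For (i)$\Rightarrow$(ii), I start from a solution $(x^\star,\lambda^{i\star},\mu^{i\star})$ of \eqref{eq:LCS1} and set $\bar{e}_k^i:=\lambda_k^{i\star}-E^i x_k^\star$.

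\emph{State equation.} Since $\sum_j B^j u_k^{j\star}=-\sum_j B^j(L^j\bar{x}_k+\bar{b}_k^j)$, equation \eqref{eq:LCS1_eq1} becomes \eqref{eq:LCS2_E1} at once.

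\emph{Complementarity.} The same substitution turns \eqref{eq:LCS1_eq3} into \eqref{eq:LCS2_E3}.

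\emph{The $\bar{b}$ equation.} From \eqref{eq:BarU1} we have $R^{ii}u_k^{i\star}=-B^{i\prime}\lambda_{k+1}^{i\star}+N^{ii\prime}\mu_k^{i\star}$. Substitute $\lambda_{k+1}^{i\star}=E^i x_{k+1}^\star+\bar{e}^i_{k+1}$ and write $x_{k+1}^\star=Ax_k^\star+B^iu_k^{i\star}+w_k^i$, where $w_k^i=-\sum_{j\in-i}B^j(L^j\bar{x}_k+\bar{b}^j_k)$. This gives $Y^iu_k^{i\star}=-B^{i\prime}(E^iAx_k^\star+E^iw_k^i+\bar{e}^i_{k+1})+N^{ii\prime}\mu_k^{i\star}$. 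Since $Y^iL^i=B^{i\prime}E^iA$, substituting $u_k^{i\star}=-L^ix_k^\star-\bar{b}_k^i$ yields exactly \eqref{eq:bstar}.

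\emph{The $\bar{e}$ equation.} Insert $\lambda_k^{i\star}=E^ix_k^\star+\bar{e}^i_k$ into \eqref{eq:LCS1_eq2} and again expand $x_{k+1}^\star$. The quadratic-in-$x_k$ terms collect as
\[
(Q^i+A'E^iA-A'E^iB^iL^i-E^i)x_k^\star ,
\]
which vanishes by the Riccati equation \eqref{eq:Eeq}. The remaining terms are $A'\bar{e}^i_{k+1}+A'E^iw_k^i-A'E^iB^i\bar{b}_k^i-M^{i\prime}\mu_k^{i\star}$, and this is \eqref{eq:LCS2_E2}. This step is a bookkeeping check and has no hidden difficulty.

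For (ii)$\Rightarrow$(i), I run the argument in reverse. Given a solution $(\bar{x},\bar{e}^i,\bar{b}^i,\bar{\mu}^i)$ of \eqref{eq:LCS2}, define $x_k^\star=\bar{x}_k$, $\mu_k^{i\star}=\bar{\mu}_k^i$ and $\lambda_k^{i\star}=E^i\bar{x}_k+\bar{e}_k^i$. The key fact is that \eqref{eq:bstar} together with $Y^iL^i=B^{i\prime}E^iA$ is algebraically equivalent to the identity $-L^i\bar{x}_k-\bar{b}^i_k=-(R^{ii})^{-1}(B^{i\prime}\lambda^{i\star}_{k+1}-N^{ii\prime}\mu^{i\star}_k)$, once $\bar{x}_{k+1}$ is eliminated using \eqref{eq:LCS2_E1}. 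The manipulations in the previous paragraph are all reversible, because each only adds or subtracts identical terms or multiplies by an invertible matrix. With that identity in hand, \eqref{eq:LCS1_eq1}, \eqref{eq:LCS1_eq2} and \eqref{eq:LCS1_eq3} follow in the same order as before.

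The main obstacle is the implicitness in the $\bar{b}$ step. In \eqref{eq:bstar} the term $x_{k+1}$ itself depends on $u_k^i$, so I must be careful to expand $x_{k+1}=Ax_k+B^iu^i_k+w^i_k$ with player $i$'s own control separated out. That separation is what converts $R^{ii}$ into $Y^i$ and lets the coupling through the other players' $\bar{b}^j$ appear only via $w_k^i$. I will verify this one identity explicitly, in both directions. The rest is the Riccati cancellation, and I expect it to go through routinely.
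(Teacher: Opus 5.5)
Your proposal is correct and follows essentially the same route as the paper. Both use the affine change of variables $\lambda_k^{i}=E^i x_k+e_k^i$, separate player $i$'s own control in $x_{k+1}$ to turn $R^{ii}$ into $Y^i$ (giving $u_k^i=-L^ix_k-b_k^i$), and use the Riccati equation \eqref{eq:Eeq} to cancel the $x_k$ terms in the costate equation; the paper writes out the reverse direction explicitly where you appeal to reversibility of the same algebra.
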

\begin{proof}
    First, we show (i)$\implies$(ii). For each $i\in\N$, using $E^i$ from \eqref{eq:Eeq}, define $e_k^{i\star}=\lambda_k^{i\star}-E^ix_k^{\star}$, $\forall k\in\mathbb{N}_0$. So, from \eqref{eq:BarU1}, $R^{ii}u_k^{i\star}=-B^{i\prime}\lambda_{k+1}^{i\star}+{N^{ii}}^{\prime}\mu_k^{i\star}=-B^{i\prime}(E^ix_{k+1}^{\star}+e_{k+1}^{i\star})+{N^{ii}}^{\prime}\mu_k^{i\star}$. Using $x_{k+1}^{\star}=Ax_k^{\star}+B^iu_k^{i\star}+\sum_{j\in-i}B^ju_k^{j\star}$ (from \eqref{eq:LCS1_eq1}) in the above expression of $R^{ii}u_k^{i\star}$ and simplifying, we have $(R^{ii}+B^{i\prime}E^iB^i)u_k^{i\star}=-B^{i\prime}E^i(Ax_k^{\star}+\sum_{j\in-i}B^ju_k^{j\star})-B^{i\prime}e_{k+1}^{i\star}+{N^{ii}}^{\prime}\mu_k^{i\star}$. Next, as $Y^i=R^{ii}+B^{i\prime}E^iB^i\succ 0$ (by Remark \ref{rem:R1}) and $L^i=(Y^i)^{-1}B^{i\prime}E^iA$, we have $u_k^{i\star}=-L^ix_k^{\star}-(Y^i)^{-1}(B^{i\prime}E^i\sum_{j\in-i}B^ju_k^{j\star}+B^{i\prime}e_{k+1}^{i\star}-{N^{ii}}^{\prime}\mu_k^{i\star})$ or $u_k^{i\star}=-L^ix_k^{\star}-b_k^{i\star}$, where, we have defined
    \begin{align}
        Y^ib_k^{i\star}=B^{i\prime}E^i\sum_{j\in-i}B^ju_k^{j\star}+B^{i\prime}e_{k+1}^{i\star}-{N^{ii}}^{\prime}\mu_k^{i\star}. \label{eq:bar_b}
    \end{align}
    So, together with \eqref{eq:BarU1}, for each $i\in\N$, $u_k^{i\star}$ can be written as
    \begin{align}
        u_k^{i\star}&=-(R^{ii})^{-1}(B^{i\prime}\lambda_{k+1}^{i\star}-{N^{ii}}^{\prime}\mu_k^{i\star})=-L^ix_k^{\star}-b_k^{i\star}.\label{eq:Bu2Bs}
    \end{align}
   Next, using \eqref{eq:Bu2Bs} in \eqref{eq:LCS1_eq1}, we have
   \begin{subequations}\label{eq:LCS_1to2}
    \begin{align}
        x_{k+1}^{\star}=(A-\sum_{j\in\N}B^jL^j)x_k^{\star}-\sum_{j\in\N}B^jb_k^{j\star},~ x_0^{\star}=x_0. \label{eq:FBeq1_1}
    \end{align}
    Further, using \eqref{eq:LCS1_eq2}, we have $e_k^{i\star}=\lambda_k^{i\star}-E^ix_k^{\star}=Q^ix_k^{\star}+A^{\prime}\lambda_{k+1}^{i\star}-{M^i}^{\prime}\mu_k^{i\star}-E^ix_k^{\star}$. Next,  using $\lambda_{k+1}^{i\star}=E^ix_{k+1}^{\star}+e_{k+1}^{i\star}$ and \eqref{eq:FBeq1_1} in the above expression of $e_k^{i\star}$ and simplifying, we have 
\begin{align*}
    &e_k^{i\star}=(Q^i+A^{\prime}E^iA-A^{\prime}E^iB^iL^i-E^i)x_k^{\star}+A^{\prime}\big(e_{k+1}^{i\star}\\
    &\quad -E^i\sum_{j\in-i}B^j(L^jx_k^{\star}+b_k^{j\star})\big)-A^{\prime}E^{i}B^ib_k^{i\star}-M^{i\prime}\mu_k^{i\star}.
\end{align*}
As $L^i=(Y^i)^{-1}B^{i\prime}E^iA$, by \eqref{eq:Eeq}, the matrix multiplied with $x_{k}^{\star}$ in the last expression vanishes. Thus, for each $i\in\N$, we have
\begin{align}
    e_k^{i\star}&=A^{\prime}\big(e_{k+1}^{i\star}-E^i\sum_{j\in-i}B^j(L^jx_k^{\star}+b_k^{j\star})\big)\nonumber\\
    &\hspace{1.0in}-A^{\prime}E^{i}B^ib_k^{i\star}-M^{i\prime}\mu_k^{i\star}.
\end{align}
  Finally, using $u_k^{j\star}=-L^jx_k^{\star}-b_k^{j\star}$ in \eqref{eq:bar_b} and \eqref{eq:LCS1_eq3}, we have
  \begin{align}
       &Y^ib_k^{i\star}=B^{i\prime}\big(e_{k+1}^{i\star}-E^i\sum_{j\in-i}B^j(L^jx_k^{\star}+b^{j\star})\big)-{N^{ii}}^{\prime}\mu_k^{i\star}, \\
       &0 \leq \big(M^i-\sum_{j\in\N}N^{ij}L^j\big)x_k^{\star}-\sum_{j\in\N}N^{ij}b_k^{j\star}+r^i\perp \mu_k^{i\star}\geq 0.
  \end{align}
\end{subequations}
for $i\in\N$. Thus, from the above, we observe that if (i) holds, then \eqref{eq:LCS_1to2} is solvable. But \eqref{eq:LCS_1to2} and \eqref{eq:LCS2} represent the same LCS, therefore LCS \eqref{eq:LCS2} is also solvable. So, (i)$\implies$(ii).

  Next, we  prove the opposite direction, i.e., (ii)$\implies$(i). Define $\bar{\lambda}_k^{i}:=E^i\bar{x}_k+\bar{e}_k^{i}$, $k\in\mathbb{N}_0$ for each $i\in\N$. As $Y^i=R^{ii}+B^{i\prime}E^iB^i \succ 0$, from \eqref{eq:U2}, $(R^{ii}+B^{i\prime}E^iB^i)\bar{u}_k^{i}=-B^{i\prime}E^iA\bar{x}_k-Y^i\bar{b}_k^{i}$. Using $Y^{i}\bar{b}_k^{i}=B^{i\prime}\big(\bar{e}_{k+1}^{i}-E^i\sum_{j\in-i}B^j(L^j\bar{x}_k+\bar{b}_k^{j})\big)-{N^{ii}}^{\prime}\bar{\mu}_k^{i}=B^{i\prime}\bar{e}_{k+1}^{i}+B^{i\prime}E^i\sum_{j\in-i}B^j\bar{u}_k^{j}-{N^{ii}}^{\prime}\bar{\mu}_k^{i}$ from \eqref{eq:bstar} and simplifying the expression for $\bar{u}_k^{i}$, we have $ R^{ii}\bar{u}_k^{i}=-B^{i\prime}E^i(A\bar{x}_k+\sum_{j\in\N}B^j\bar{u}_k^{j})-B^{i\prime}\bar{e}_{k+1}^{i}+{N^{ii}}^{\prime}\bar{\mu}_k^{i}$.
    Since $\bar{x}_{k+1}=A\bar{x}_k+\sum_{j\in\N}B^j\bar{u}_k^{j}$, we have $R^{ii}\bar{u}_k^{i}=-B^{i\prime}(E^i\bar{x}_{k+1}+\bar{e}_{k+1}^{i})+{N^{ii}}^{\prime}\bar{\mu}_k^{i}$. Finally, using $\bar{\lambda}_{k+1}^{i}=E^i\bar{x}_{k+1}+\bar{e}_{k+1}^{i}$ and $R^{ii}\succ 0$, for each $i\in \N$, we arrive at the following relation
  \begin{align}
    \bar{u}_k^{i}&=-L^i\bar{x}_k-\bar{b}_k^{i}=-(R^{ii})^{-1}\big(B^{i\prime}\bar{\lambda}_{k+1}^{i}-{N^{ii}}^{\prime}\bar{\mu}_k^{i}\big).\label{eq:U5}
 \end{align}
Next, using the above relation in \eqref{eq:LCS2_E1}, we have
\begin{subequations}\label{eq:LCS_3}
    \begin{align}
        \bar{x}_{k+1}=A\bar{x}_k-\sum_{j\in\N}B^j(R^{jj})^{-1}\big(B^{j\prime}\bar{\lambda}_{k+1}^{j}-{N^{jj}}^{\prime}\bar{\mu}_k^{j}\big), \label{eq:FBeq1}
    \end{align}
with $x_0^{\star}=x_0$. Similarly,
from \eqref{eq:LCS2_E1}, $\bar{\lambda}_{k+1}^{i}=E^i\bar{x}_{k+1}+\bar{e}_{k+1}^{i}=E^i\big((A-\sum_{j\in\N}B^jL^j)\bar{x}_k-\sum_{j\in\N}B^j\bar{b}_k^{j}\big)+\bar{e}_{k+1}^{i}$. Using this expression in the following, we have   
\begin{align*}
    &Q^i\bar{x}_k+A^{\prime}\bar{\lambda}_{k+1}^{i}-M^{i\prime}\bar{\mu}_k^{i}=Q^i\bar{x}_k+A^{\prime}E^i\big((A-\sum_{j\in\N}B^jL^j)\bar{x}_k\\
    &\quad-\sum_{j\in\N}B^j\bar{b}_k^{j}\big)+A^{\prime}\bar{e}_{k+1}^{i}-M^{i\prime}\bar{\mu}_k^{i}
    \end{align*}
    \begin{align*}
    &=Q^i\bar{x}_k+(A^{\prime}E^iA-A^{\prime}E^{i}B^iL^i\big)\bar{x}_k-\sum_{j\in-i}A^{\prime}E^iB^j\\
    &\quad\times (L^j\bar{x}_k+\bar{b}_k^{j})-A^{\prime}E^{i}B^i\bar{b}_k^i+A^{\prime}\bar{e}_{k+1}^{i}-M^{i\prime}\bar{\mu}_k^{i}\\
    &=\big(Q^i+A^{\prime}E^iA-A^{\prime}E^{i}B^i(Y^i)^{-1}B^{i\prime}E^iA\big)\bar{x}_k\\
    &\quad+A^{\prime}\big(\bar{e}_{k+1}^{i}-\sum_{j\in-i}E^iB^j(L^j\bar{x}_k+\bar{b}_k^{j})\big)-A^{\prime}E^{i}B^i\bar{b}_k^i-M^{i\prime}\bar{\mu}_k^{i}
\end{align*}
where, we have used $L^i=(Y^i)^{-1}B^{i\prime}E^iA$ in the last expression. Next, using \eqref{eq:Eeq} and \eqref{eq:LCS2_E2}, the final expression in the above simplifies to $E^i\bar{x}_k+\bar{e}_k^{i}=\bar{\lambda}_k^{i}$. This implies
\begin{align}
\bar{\lambda}_{k}^{i}=Q^i\bar{x}_k+A^{\prime}\bar{\lambda}_{k+1}^{i}-{M^i}^{\prime}\bar{\mu}_k^{i}, \quad i\in\N. \label{eq:FBeq2}
\end{align}
Further, using \eqref{eq:U5} in \eqref{eq:LCS2_E3}, for each $i\in\N$, we have 
\begin{align}
    &0 \leq M^i\bar{x}_k-\sum_{j\in\N}N^{ij}(R^{jj})^{-1}\big(B^{j\prime}\bar{\lambda}_{k+1}^{j}-{N^{jj}}^{\prime}\bar{\mu}_k^{j}\big)\nonumber\\
    &\hspace{1.5in}+r^i \perp \bar{\mu}_k^{i}\geq 0.\label{eq:CC1}
\end{align}
\end{subequations}
So, (ii) implies solvability of \eqref{eq:LCS_3}. But \eqref{eq:LCS_3} and \eqref{eq:LCS1} represent the same LCS. Thus, (ii) implies the solvability of \eqref{eq:LCS1}.
\end{proof}
\begin{remark}\label{rem:LCS_eqv}
    From the proof of Theorem \ref{th:LCSeqv}, note that, under Assumptions \ref{ass:G1} and \ref{ass:G2}, the equivalence extends beyond solvability of LCSs \eqref{eq:LCS1} and \eqref{eq:LCS2}. Their solutions are related through affine transformations $e_k^{i\star}=\lambda_k^{i\star}-E^ix_k^{\star}$ and $\bar{\lambda}_k^{i}:=E^i\bar{x}_k+\bar{e}_k^{i}$ for $k\in\mathbb{N}_0$ and $i\in\N$.
\end{remark}
\begin{corollary}\label{cor:Th2C1}
    Let Assumptions \ref{ass:G1} and \ref{ass:G2} hold and $x_0\in\X_0$. Then LCS \eqref{eq:LCS1} admits square-summable solutions if and only if LCS \eqref{eq:LCS2} does.
\end{corollary}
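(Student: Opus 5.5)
The plan is to use the affine correspondence in Remark \ref{rem:LCS_eqv} (from the proof of Theorem \ref{th:LCSeqv}) and show that it maps $\ell^2$ sequences to $\ell^2$ sequences in both directions. The key observation is that the maps $e_k^{i\star}=\lambda_k^{i\star}-E^ix_k^{\star}$ and $\bar{\lambda}_k^{i}=E^i\bar{x}_k+\bar{e}_k^{i}$ are time-invariant and linear in $(x_k,\lambda_k^i)$, with constant matrices $E^i$. Linear combinations of square-summable sequences with constant matrix coefficients are square-summable. Throughout, the multipliers are carried over unchanged ($\bar{\mu}_k=\mu_k^\star$), and so is the state ($\bar{x}_k=x_k^\star$).

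\emph{(i)$\Rightarrow$(ii).} Suppose \eqref{eq:LCS1} has a solution with $(x_k^\star)\in\ell^2(\R^n)$, $(\lambda_k^{i\star})\in\ell^2(\R^n)$ for all $i$, and $(\mu_k^\star)\in\ell^2(\R^c_+)$. Define $e_k^{i\star}=\lambda_k^{i\star}-E^ix_k^\star$. By the proof of Theorem \ref{th:LCSeqv}, $(x_k^\star,e_k^{i\star},b_k^{i\star},\mu_k^{i\star})$ solves \eqref{eq:LCS2}. Since $\|e_k^{i\star}\|\le\|\lambda_k^{i\star}\|+\|E^i\|\|x_k^\star\|$, the triangle inequality in $\ell^2$ gives $(e_k^{i\star})\in\ell^2$. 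Hence $(\bar e_k)\in\ell^2(\R^{Nn})$, and $(\bar x_k)$ and $(\bar\mu_k)$ are square-summable by hypothesis. If one also wants $\bar b_k$ to be square-summable, use \eqref{eq:bstar}: $\bar b_k^i$ is a constant-matrix combination of $\bar e_{k+1}^i$, $\bar x_k$, $\bar b_k^{j}$ and $\bar\mu_k^i$. As in the proof of Theorem \ref{cor:C1}, the stacked system has the invertible coefficient $R+B'EB\succ0$, so $\bar b_k$ is square-summable. Alternatively, \eqref{eq:Bu2Bs} gives $b_k^{i\star}=-u_k^{i\star}-L^ix_k^\star$, where $u_k^{i\star}$ is a constant-matrix image of $\lambda_{k+1}^{i\star}$ and $\mu_k^{i\star}$, and shifted $\ell^2$ sequences remain in $\ell^2$.

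\emph{(ii)$\Rightarrow$(i).} Conversely, take a square-summable solution of \eqref{eq:LCS2} and set $\bar\lambda_k^i=E^i\bar x_k+\bar e_k^i$. The proof of Theorem \ref{th:LCSeqv} shows this gives a solution of \eqref{eq:LCS1}, namely \eqref{eq:LCS_3}, with the same state and multipliers. Square-summability of $\bar\lambda^i$ follows from the same triangle-inequality bound. The associated controls \eqref{eq:U5} are then in $\ell^2$ as well.

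The main obstacle is not analytic but definitional: the corollary must be read with ``square-summable solution'' meaning that the state, costates (or $e$-variables) and multipliers all lie in $\ell^2$. One must also check that the transformation preserves the initial condition $x_0$ and the complementarity conditions exactly; both were already established in Theorem \ref{th:LCSeqv}. Beyond that, the only ingredients are boundedness of the constant operators $E^i$ and shift invariance of $\ell^2$.
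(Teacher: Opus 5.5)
Your proposal is correct and is essentially the paper's proof: both transport solutions through the correspondence of Theorem~\ref{th:LCSeqv} and Remark~\ref{rem:LCS_eqv}, which keeps $x_k$ and $\mu_k$ fixed and relates the costates by the time-invariant linear maps $e_k^{i}=\lambda_k^{i}-E^ix_k$, so $\ell^2$ membership carries over in both directions (your triangle-inequality bound simply spells out the paper's one-line remark). The aside on $\bar b_k$ is not needed for the statement; if you keep it, rely on your second route via \eqref{eq:Bu2Bs} and \eqref{eq:U5}, because the stacked coefficient matrix of \eqref{eq:bstar} actually has off-diagonal blocks $B^{i\prime}E^iB^j$ and is therefore not the block-diagonal $R+B'EB\succ 0$ invoked in the proof of Theorem~\ref{cor:C1}.
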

\begin{proof}
    The equivalence of solvability of LCSs \eqref{eq:LCS1} and \eqref{eq:LCS2} is established in Theorem \ref{th:LCSeqv}, and by Remark \ref{rem:LCS_eqv}, their solutions are related through affine transformations $e_k^{i\star}=\lambda_k^{i\star}-E^ix_k^{\star}$ and $\bar{\lambda}_k^{i}=E^i\bar{x}_k+\bar{e}_k^{i}$, $\forall k\in\mathbb{N}_0$ and $i\in\N$. So, the proof follows, as square-summability is preserved under these affine transformations.
\end{proof}
The next theorem, building on the previous results, establishes a sufficient condition under which a solution of \eqref{eq:LCS1} yields an OL-GNE.
\begin{theorem}\label{cor:C2}
    Let Assumptions \ref{ass:G1} and \ref{ass:G2} hold. For a given  $x_0\in\X_0$, suppose LCS \eqref{eq:LCS1} is solvable and admits  square-summable solutions, i.e., $(x_k^{\star})_{k=0}^{\infty}\in \ell^2(\R^n)$, $(\lambda_k^{\star})_{k=0}^{\infty}\in \ell^2(\R^{Nn})$, $(\mu_k^{\star})_{k=0}^{\infty}\in \ell^2(\R_{+}^{c})$, where $\lambda_k^{\star}=\col{\lambda_k^{i\star}}_{i=1}^N$ and $\mu_k^{\star}=\col{\mu_k^{i\star}}_{i=1}^N$. 
    Then, for any such square-summable solution, the controls $\bu^{i\star}:=(u_k^{i\star})_{k=0}^{\infty},~i\in\N$ defined by \eqref{eq:BarU1} constitute an OL-GNE of IDGC \eqref{eq:Game1}.
\end{theorem}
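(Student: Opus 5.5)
The plan is to reduce Theorem \ref{cor:C2} to Theorem \ref{cor:C1} through the affine change of variables of Theorem \ref{th:LCSeqv}. Take any square-summable solution $(x_k^{\star},\lambda_k^{\star},\mu_k^{\star})$ of LCS \eqref{eq:LCS1}. For each $i\in\N$ and $k\in\mathbb{N}_0$, set $\bar{x}_k:=x_k^{\star}$, $\bar{e}_k^{i}:=\lambda_k^{i\star}-E^ix_k^{\star}$, $\bar{\mu}_k^{i}:=\mu_k^{i\star}$, and let $\bar{b}_k^{i}:=b_k^{i\star}$ be defined by \eqref{eq:bar_b}. The proof of the implication (i)$\implies$(ii) in Theorem \ref{th:LCSeqv} shows that these sequences satisfy \eqref{eq:LCS_1to2}. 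That system is exactly LCS \eqref{eq:LCS2} with initial condition $x_0$. So the solvability hypothesis of Theorem \ref{cor:C1} holds for this particular solution.

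Next I will check the square-summability requirements of Theorem \ref{cor:C1}.
\begin{itemize}
\item $(\bar{x}_k)\in\ell^2(\R^n)$ and $(\bar{\mu}_k)\in\ell^2(\R^c_+)$ hold by hypothesis, since these are the original sequences.
\item $(\bar{e}_k)\in\ell^2(\R^{Nn})$ holds because $\bar{e}_k^i=\lambda_k^{i\star}-E^ix_k^{\star}$ is a fixed linear combination of square-summable sequences. This is the content of Corollary \ref{cor:Th2C1} and Remark \ref{rem:LCS_eqv}.
\end{itemize}
The remaining hypothesis of Theorem \ref{cor:C1} is the transversality condition $\bar e_k^i\to 0$. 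It is used inside Lemma \ref{lem:L1} and Theorem \ref{thm:T1}, and it follows immediately because square-summable sequences tend to zero.

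Applying Theorem \ref{cor:C1}, the controls $\bar{u}_k^{i}=-L^i\bar{x}_k-\bar{b}_k^{i}$ form an OL-GNE. It remains to identify them with the controls $u_k^{i\star}$ defined in \eqref{eq:BarU1}, and \eqref{eq:Bu2Bs} gives exactly $u_k^{i\star}=-L^ix_k^{\star}-b_k^{i\star}$. Since the change of variables is bijective, the argument applies to every square-summable solution, not just a selected one.

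The main obstacle is consistency between the state appearing in \eqref{eq:LCS2} and the state generated by the controls. The trajectory must actually be the one driven by $\bar{u}^i$, so that the hypotheses of Lemma \ref{lem:L1} hold. In particular, $\bar{\bu}^{-i}\in\U^{-i}(x_0)$ must follow from the complementarity constraints being satisfied along that trajectory. Theorem \ref{cor:C1} already handles this internally, so the care needed here is to verify that \eqref{eq:FBeq1_1} coincides with \eqref{eq:LCS2_E1}. I will also note that the $\bar b_k$ obtained from \eqref{eq:bar_b} is square-summable; either that equation or the argument in the proof of Theorem \ref{cor:C1} gives this.
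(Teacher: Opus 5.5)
Your proposal is correct and follows essentially the paper's own argument. You map the square-summable solution of LCS \eqref{eq:LCS1} to a square-summable solution of LCS \eqref{eq:LCS2} via $\bar e_k^i=\lambda_k^{i\star}-E^ix_k^{\star}$ (Theorem \ref{th:LCSeqv}, Corollary \ref{cor:Th2C1}), apply Theorem \ref{cor:C1}, and identify the resulting controls with $u_k^{i\star}$ through \eqref{eq:Bu2Bs}. Your observation that $\bar b_k$ is square-summable directly from \eqref{eq:bar_b}, because each $u_k^{j\star}$ in \eqref{eq:BarU1} is already in $\ell^2$, is a useful touch: it makes that step independent of the positive-definiteness argument for $\bar b_k$ used in the proof of Theorem \ref{cor:C1}.
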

\begin{proof}
   From Theorem \ref{cor:C1}, we know, if \eqref{eq:LCS2} is solvable and $(\bar{x}_k)_{k=0}^{\infty}\in \ell^2(\R^n)$, $(\bar{e}_k)_{k=0}^{\infty}\in \ell^2(\R^{Nn})$, $(\bar{\mu}_k)_{k=0}^{\infty}\in \ell^2(\R^{c}_{+})$, then, $\bar{u}_k^{i},~i\in\N$, defined in \eqref{eq:U2}, is an OL-GNE for IDGC \eqref{eq:Game1}. In other words, \eqref{eq:GNEeqTH2} hold for each $i\in\N$. 
   But by the equivalence in Theorem \ref{th:LCSeqv}, Corollary \ref{cor:Th2C1} and in particular from \eqref{eq:Bu2Bs} and \eqref{eq:U5}, we note that for each $i\in\N$, $\bar{u}_k^{i}$ and $u_k^{i\star}$ are exactly same. Therefore, from \eqref{eq:GNEeqTH2}, for all $i\in\N$,
   \begin{align*}
        J^i(x_0, (\bu^{i\star},\bu^{-i\star}))\leq  J^i(x_0, (\bu^i,\bu^{-i\star})), ~\forall \bu^i\in 	\U^i(\bu^{-i\star}; x_0).
   \end{align*}
   So, by Definition \ref{def:OCNEdef}, $(\bu^{i\star},\bu^{-i\star})$ is an OL-GNE for \eqref{eq:Game1}.
\end{proof}
%
%
%
\begin{remark}
     In the unconstrained setting, infinite-horizon OL Nash equilibria for difference games were studied in \cite{Monti:2024}. The proof of \cite[Theorem~4.8]{Monti:2024} invokes \cite[Lemma~4.4]{Monti:2024} by replacing $w(\cdot)$ with the opponent's OL Nash strategies. As \cite[Lemma~4.4]{Monti:2024} relies on the $\ell^2$ property of $w(\cdot)$, its application appears to require the opponent's strategy to belong to $\ell^2$, a property not established at that stage of the proof. Consequently, additional justification is needed for the player-wise application of \cite[Lemma~4.4]{Monti:2024}. Instead, a consistent derivation requires the square-summability of the state-costate sequences, established later under \cite[Assumption~4.9]{Monti:2024}. This note extends \cite{Monti:2024} to the coupled constrained case using different analysis and proof techniques that avoid this issue (Theorems \ref{cor:C1} and \ref{cor:C2}) and, unlike \cite{Monti:2024}, does not require the system matrix $A$ to be invertible.
\end{remark}
%
\section{Reformulation of Sufficient Conditions}\label{sec:Solvability}
 This section reformulates sufficient conditions of Theorem \ref{cor:C2} into more tractable forms. In particular, we make some additional assumptions, under which, we can guarantee that LCS \eqref{eq:LCS1} admits square-summable solutions, i.e., $(x_k^{\star})_{k=0}^{\infty}\in \ell^2(\R^n)$, $(\lambda_k^{\star})_{k=0}^{\infty}\in \ell^2(\R^{Nn})$ and $(\mu_k^{\star})_{k=0}^{\infty}\in \ell^2(\R_{+}^{c})$. This reformulation also leads to a computational procedure for obtaining OL-GNE of IDGC \eqref{eq:Game1}.

 We first present an unconstrained case result in Lemma \ref{lem:L5}, which will be useful for subsequent analysis. To this end, in the absence of inequality constraints, consider the state-costate equations \eqref{eq:LCS1_eq1}-\eqref{eq:LCS1_eq2} of LCS \eqref{eq:LCS1} $\forall k\geq K$ (for some finite $K\in\mathbb{N}_0$) given as follows:
\begin{subequations}\label{eq:KFBeq}
    \begin{align}
        x_{k+1}^{\star}&=Ax_k^{\star}-\sum_{j\in\N}B^j(R^{jj})^{-1}B^{j\prime}\lambda_{k+1}^{j\star}, \label{eq:KFBeq1}\\
        \lambda_{k}^{i\star}&=Q^ix_k^{\star}+A^{\prime}\lambda_{k+1}^{i\star}, \quad i\in\N, \label{eq:KFBeq2}
    \end{align}
\end{subequations}
with initial condition $x_K^{\star}$. For all $i\in\N$, we write \eqref{eq:KFBeq} compactly as
\begin{align}
  \mathbf{M}\begin{bmatrix}
    x_k^{\star}\\
    \lambda_k^{\star}
   \end{bmatrix}=\mathbf{L}\begin{bmatrix}
                   x_{k+1}^{\star}\\
                   \lambda_{k+1}^{\star}
                   \end{bmatrix},\label{eq:SCeq}
\end{align}
where, $\mathbf{M}=
\left[
\begin{smallmatrix}
~A ~~& 0\\[0.3ex]
-Q ~~& I_{Nn}
\end{smallmatrix}
\right]$, $\mathbf{L}=
\left[
\begin{smallmatrix}
I_n&S\\
0&I_N\otimes A^{\prime}
\end{smallmatrix}
\right]$, $Q=\col{Q^i}_{i=1}^N$, $S=\bar{B}R^{-1}B^{\prime}$, $R=\oplus_{i=1}^{N}R^{i i}$, $B=\oplus_{i=1}^{N}B^i$,  $\bar{B}=[B^{1},\cdots, B^{N}]$.
\begin{assumption}\label{ass:A_4}
    The matrix pencil $\mathbf{M}-\lambda \mathbf{L}$ is regular with exactly $n$ generalized eigenvalues in the open unit disk. Moreover, its stable deflating subspace admits a basis of the form $Z=\col{X,Y}$, $X\in\mathbb R^{n\times n},\; Y\in\mathbb R^{Nn\times n}$, with $X$ nonsingular.
\end{assumption}
The stable deflating subspace associated with the pencil $\mathbf{M}-\lambda \mathbf{L}$ is the subspace spanned by all generalized eigenvectors corresponding to generalized eigenvalues inside the unit disk; see \cite{Pappas:1980} for details.
\begin{lemma}\label{lem:L5}
     Let Assumption \ref{ass:A_4} hold. Then every state-costate trajectory \eqref{eq:SCeq} evolving in the stable deflating subspace of the pencil $\mathbf{M}-\lambda \mathbf{L}$ satisfies $\lambda_k^{\star}=Px_k^{\star},~\forall k\geq K$, with $P=YX^{-1}$. Moreover, if $I+SP$ is invertible, then $P$ satisfies the following coupled Riccati equations
     \begin{align}
         P=Q+(I_N\otimes A^{\prime})P(I+SP)^{-1}A,\label{eq:RiccatiP}
     \end{align}
     and the matrix $(I+SP)^{-1}A$ is Schur stable.
\end{lemma}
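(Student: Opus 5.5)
The plan is the standard deflating-subspace argument for discrete Riccati equations. By Assumption \ref{ass:A_4}, the stable deflating subspace $\mathcal{V}=\mathrm{Im}\,Z$ with $Z=\col{X,Y}$ has dimension $n$. Since the pencil is regular, there is a matrix $\Lambda\in\R^{n\times n}$ with all eigenvalues in the open unit disk such that
\[
\mathbf{M}Z\Lambda=\mathbf{L}Z ,
\]
or the analogous relation $\mathbf{M}ZT=\mathbf{L}ZS_0$ with $S_0$ invertible. I will use the convention matching \eqref{eq:SCeq}, where $\mathbf{M}$ acts on time $k$ and $\mathbf{L}$ on time $k+1$.

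A trajectory evolving in $\mathcal{V}$ means $\col{x_k^\star,\lambda_k^\star}=Z\xi_k$ for some coordinate sequence $\xi_k$. Then $\mathbf{M}Z\xi_k=\mathbf{L}Z\xi_{k+1}=\mathbf{M}Z\Lambda\xi_{k+1}$, and the aim is to obtain $\xi_k=\Lambda\xi_{k+1}$. For this I need $\mathbf{M}Z$ to have full column rank on $\mathcal{V}$. This holds when $0$ is not a generalized eigenvalue, i.e. when $A$ is nonsingular; otherwise I must argue directly using regularity of the pencil. Alternatively, I can avoid this issue entirely: from $x_k^\star=X\xi_k$ and the invertibility of $X$ we get $\xi_k=X^{-1}x_k^\star$, and hence
\[
\lambda_k^\star=YX^{-1}x_k^\star=Px_k^\star .
\]
This first claim needs only invertibility of $X$ and membership in $\mathcal{V}$, so I will present it first and independently.

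For the Riccati equation, substitute $\lambda_{k+1}^\star=Px_{k+1}^\star$ into \eqref{eq:KFBeq1}. This gives $(I+SP)x_{k+1}^\star=Ax_k^\star$, so when $I+SP$ is invertible,
\[
x_{k+1}^\star=(I+SP)^{-1}Ax_k^\star .
\]
Plugging this and $\lambda_k^\star=Px_k^\star$ into \eqref{eq:KFBeq2} yields
\[
Px_k^\star=Qx_k^\star+(I_N\otimes A')P(I+SP)^{-1}Ax_k^\star .
\]
This holds for every $x_k^\star\in\R^n$, because the subspace is the graph of $P$ and initial states $x_K^\star$ range over all of $\R^n$, given that $X$ is invertible. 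Hence \eqref{eq:RiccatiP} follows.

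Schur stability is the step I expect to be the main obstacle. The idea is that the closed-loop map $(I+SP)^{-1}A$ restricted to $\mathcal{V}$ is the forward evolution in the stable subspace. Concretely, write $\mathbf{M}Z=\mathbf{L}Z\,T$ for the appropriate $T$, so that in $\mathcal{V}$ the states satisfy $\xi_{k+1}=T\xi_k$ with eigenvalues of $T$ equal to the stable generalized eigenvalues. Then $X T X^{-1}=(I+SP)^{-1}A$, since both describe $x_{k+1}^\star$ as a function of $x_k^\star$. Establishing this requires checking that the first block row of $\mathbf{M}Z=\mathbf{L}ZT$ gives $AX=(X+SY)T=(I+SP)XT$, which is immediate from the block forms of $\mathbf{M}$ and $\mathbf{L}$. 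So $(I+SP)^{-1}A$ is similar to $T$, and its spectrum is the set of $n$ stable generalized eigenvalues, all lying in the open unit disk. The delicate point is justifying the existence of such a $T$ with $\sigma(T)$ equal to the stable eigenvalues, including the case where $\mathbf{L}Z$ may be rank deficient. I would handle this by using the generalized Schur (QZ) form, which gives $\mathbf{M}Z=W S_M$ and $\mathbf{L}Z=W S_L$ with triangular $S_M,S_L$ whose diagonal ratios are the stable eigenvalues. Invertibility of $I+SP$ together with the first block row shows that $S_L$ is invertible, so $T=S_L^{-1}S_M$.
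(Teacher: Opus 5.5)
Your proposal is correct and follows the paper's proof: $\lambda_k^\star=Px_k^\star$ comes from $\xi_k=X^{-1}x_k^\star$, and Schur stability comes from the first block row of $\mathbf{M}Z=\mathbf{L}ZT$, which gives $(I+SP)^{-1}A=XTX^{-1}$. The paper simply cites this deflating relation from Pappas et al., and since the stable eigenvalues of a regular pencil are finite, $\mathbf{L}Z$ automatically has full column rank, so your QZ/$I+SP$ argument is valid but not needed. The paper reads \eqref{eq:RiccatiP} directly off the second block row of the same identity, $P-Q=(I_N\otimes A')PXTX^{-1}$, which is cleaner than your trajectory argument, whose claim that the equation holds for every $x_K^\star$ silently relies on that identity anyway. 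Note also that your opening relation $\mathbf{M}Z\Lambda=\mathbf{L}Z$ with Schur-stable $\Lambda$ has the convention reversed (it characterizes generalized eigenvalues outside the closed unit disk, including $\infty$), though you never actually use it.
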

\begin{proof}
For any state-costate trajectory evolving in the stable deflating subspace of the pencil $\mathbf{M}-\lambda \mathbf{L}$ (with basis $Z=\col{X,Y}$),  there exists a vector $\eta_k$ such that $\begin{bmatrix}
    x_k^{\star}\\
    \lambda_k^{\star}
   \end{bmatrix}=\begin{bmatrix}
       X\\Y
   \end{bmatrix}\eta_k,~k\geq K$. As $X$ is nonsingular, we have $\eta_k=X^{-1}x_k^{\star}$ and $\lambda_k^{\star}=Y\eta_k=YX^{-1}x_k^{\star}=Px_k^{\star}$ for all $k\geq K$.
Also, as $Z$ spans the stable deflating subspace of the pencil $\mathbf{M}-\lambda \mathbf{L}$, there exists a matrix $G_s\in\mathbb R^{n\times n}$ with eigenvalues in the open unit disk such that $\mathbf{M}Z=\mathbf{L}ZG_s$; see \cite{Pappas:1980}. With $P=YX^{-1}$, this relation can be written as
\begin{align}
    \mathbf{M}\begin{bmatrix}
                I\\
                P
\end{bmatrix}X=\mathbf{L}\begin{bmatrix}
                          I\\
                          P
                        \end{bmatrix}XG_s \implies \mathbf{M}\begin{bmatrix}
                                                               I\\
                                                               P
                                                              \end{bmatrix}
=\mathbf{L}\begin{bmatrix}
I\\
P
\end{bmatrix}G^1, \label{eq:SDS}
\end{align}
where the second expression is obtained by post multiplying $X^{-1}$ and defining $G^1=XG_sX^{-1}$. Since, $G^1$ is similar to $G_s$, all eigenvalues of $G^1$ are also inside the open unit disk i.e. $G^1$ is Schur stable.
Expanding both sides of \eqref{eq:SDS}, gives $\begin{bmatrix}
A\\
P-Q
\end{bmatrix}
=
\begin{bmatrix}
I+SP\\
(I_N\otimes A^{\prime})P
\end{bmatrix}G^1$.
Equating the first block rows yields $A=(I+SP)G^1$ or $G^1=(I+SP)^{-1}A$, thus, $(I+SP)^{-1}A$ is Schur stable. Similarly, equating the second block rows yields coupled Riccati equations \eqref{eq:RiccatiP}. 
\end{proof}
\begin{remark}\label{rem:Px}
    Since solutions of \eqref{eq:KFBeq} converging to the origin must lie in the stable deflating subspace of the pencil $\mathbf{M}-\lambda \mathbf{L}$, it follows from Lemma \ref{lem:L5} that, under Assumption \ref{ass:A_4}, every such stabilizing solution satisfies $\lambda_k^{\star}=Px_k^{\star}$ with $x_{k+1}^{\star}=G^1x_k^{\star}$ (from \eqref{eq:KFBeq1}), for all $k\geq K$.
\end{remark}
To compactly express subsequent expressions, we define some notations: $G^1=(I+SP)^{-1}A$, $G^2=-(I+SP)^{-1}S$, $G^3=(I+SP)^{-1}\bar{B}R^{-1}(\otimes_{i=1}^{N}{N^{ii}})^{\prime}$, $H^1=(I_N\otimes A^{\prime})(I-PG^2)$, $H^2=(I_N\otimes A^{\prime})PG^3-(\otimes_{i=1}^{N}{M^i})^{\prime}$, $F^{1}=-R^{-1}B^{\prime}PG^1$, $F^{2}=-R^{-1}B^{\prime}(I+PG^2)$, $F^{3}=R^{-1}((\otimes_{i=1}^{N}{N^{ii}})^{\prime}-B^{\prime}PG^3)$, $\bu_k^{\star}=\col{u_k^{i\star}}_{i=1}^{N}$ and $\mu_k^{\star}
=\col{\mu_k^{i\star}}_{i=1}^N$.
In addition, we recall some early notations $\bar{M}=\col{M^i}_{i=1}^{N}$, $\bar{N}=\col{[N^{i1} ~N^{i2} \cdots N^{iN}]}_{i=1}^{N}$,  $r=\col{r^i}_{i=1}^N$ defined below \eqref{eq:vectorC1}. Using these notations, the complementarity conditions in \eqref{eq:LCS1_eq3}, $\forall i\in\N$, can be compactly expressed as $0 \leq \bar{M}x_k^{\star}-\bar{N}R^{-1}B^{\prime}\lambda_{k+1}^{\star}+\bar{N}R^{-1}(\otimes_{i=1}^{N}{N^{ii}})^{\prime}\mu_k^{\star}+r \perp \mu_k^{\star}\geq 0$. By adding and subtracting  the term $\bar{N}R^{-1}B^{\prime}PG^1x_k^{\star}=-\bar{N}F^1x_k^{\star}$ to the first inequality, it can be expressed as follows:
\begin{align}
    &0 \leq (\bar{M}+\bar{N}F^1)x_k^{\star}-\bar{N}R^{-1}B^{\prime}(\lambda_{k+1}^{\star}-PG^1x_k^{\star})\nonumber\\
    &\hspace{0.25in}+\bar{N}R^{-1}(\otimes_{i=1}^{N}{N^{ii}})^{\prime}\mu_k^{\star}+r \perp \mu_k^{\star}\geq 0,~\forall k\in\mathbb{N}_0.\label{eq:LCS1_eq3R}
\end{align}
Further, let LCS$|_{K}$ represent the restriction of the infinite-horizon LCS \eqref{eq:LCS1} to a finite horizon $K$, for some given $K\in \mathbb{N}_0$.

The next result presents a reformulation of sufficient conditions. 
\begin{theorem}\label{th:Th_K}
     Let Assumptions \ref{ass:G1}, \ref{ass:G2} and \ref{ass:A_4} hold. Let for a given $x_0\in\X_0$, there exist a finite $K\in\mathbb{N}_0$ such that:
     \begin{enumerate}[label=(\roman*)]
        \item \label{item:Th4_i} LCS$|_{K}$ is solvable with terminal conditions $\lambda_{K}^{\star}=Px_K^{\star}$, $\mu_K^{\star}=0$, where $P$ is the solution of coupled Riccati equations \eqref{eq:RiccatiP} and 
        \item \label{item:Th4_ii} $(\bar{M}+\bar{N}F^1)\phi(k-K)x_K^{\star}+r>0$ for all $k\geq K$, where, $\phi(\tau)=(G^1)^{\tau}$ for $\tau>0$ and $\phi(0)=I$ with $G^1=(I+SP)^{-1}A$.
     \end{enumerate} 
    Then, LCS \eqref{eq:LCS1} admits square-summable solutions and the corresponding controls given by \eqref{eq:BarU1} constitute an OL-GNE for IDGC \eqref{eq:Game1}.
\end{theorem}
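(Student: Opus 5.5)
The plan is to build an explicit infinite-horizon solution of LCS \eqref{eq:LCS1} by splicing the finite-horizon solution from item \ref{item:Th4_i} with the unconstrained stable solution of Lemma \ref{lem:L5} on the tail. Then I check square-summability and invoke Theorem \ref{cor:C2}.

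\emph{Construction.} On $0\le k\le K-1$ (and at $k=K$ for the state and costate), I take the solution $(x_k^\star,\lambda_k^\star,\mu_k^\star)$ of LCS$|_K$, which satisfies $\lambda_K^\star=Px_K^\star$ and $\mu_K^\star=0$. For $k\ge K$ I set $\mu_k^\star=0$, $x_{k+1}^\star=G^1x_k^\star$, i.e.\ $x_k^\star=\phi(k-K)x_K^\star$, and $\lambda_k^\star=Px_k^\star$.

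\emph{Tail dynamics.} With $\mu_k^\star=0$, equations \eqref{eq:LCS1_eq1}--\eqref{eq:LCS1_eq2} reduce to \eqref{eq:KFBeq}, so I must check that \eqref{eq:KFBeq} holds for $k\ge K$.
\begin{itemize}
\item For \eqref{eq:KFBeq1}, I need $x_{k+1}^\star=Ax_k^\star-SPx_{k+1}^\star$, i.e.\ $(I+SP)x_{k+1}^\star=Ax_k^\star$. This is exactly $x_{k+1}^\star=G^1x_k^\star$, using invertibility of $I+SP$. I read this invertibility as implicit in the statement, since $G^1$ is defined with $(I+SP)^{-1}$.
\item For \eqref{eq:KFBeq2}, I need $Px_k^\star=Qx_k^\star+(I_N\otimes A')PG^1x_k^\star$. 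This follows from the coupled Riccati equations \eqref{eq:RiccatiP}.
\end{itemize}

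\emph{Complementarity on the tail.} I use the rewritten form \eqref{eq:LCS1_eq3R}. For $k\ge K$ we have $\lambda_{k+1}^\star-PG^1x_k^\star=Px_{k+1}^\star-PG^1x_k^\star=0$ and $\mu_k^\star=0$. The left-hand quantity therefore becomes $(\bar M+\bar NF^1)\phi(k-K)x_K^\star+r$, which is strictly positive by item \ref{item:Th4_ii}. With $\mu_k^\star=0$, the condition $0\le\cdot\perp\mu_k^\star\ge0$ then holds.

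\emph{Junction at $k=K$.} LCS$|_K$ is understood to include the equations linking stage $K-1$ to stage $K$, with the terminal data prescribed. The tail starts from the same $x_K^\star$ and the same $\lambda_K^\star=Px_K^\star$, and its stage-$K$ complementarity is covered by item \ref{item:Th4_ii} at $k=K$. So the concatenated sequence solves \eqref{eq:LCS1} for all $k\in\mathbb N_0$, with $x_0^\star=x_0$.

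\emph{Square-summability.} $G^1$ is Schur stable by Lemma \ref{lem:L5}, so $(x_k^\star)_{k\ge K}\in\ell^2(\R^n)$ by Lemma \ref{lem:L00}(i). Then $\lambda_k^\star=Px_k^\star$ is square-summable on the tail, and $\mu_k^\star=0$ there. The finitely many terms with $k<K$ are finite and do not affect summability. Hence $(x^\star,\lambda^\star,\mu^\star)$ is a square-summable solution with $\mu_k^\star\ge0$, and Theorem \ref{cor:C2} yields that \eqref{eq:BarU1} is an OL-GNE.

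\emph{Expected obstacle.} The main delicacy is bookkeeping at the junction. I must make precise what ``LCS$|_K$ with terminal conditions'' includes, so that the equations at index $K-1$ use $\lambda_K^\star=Px_K^\star$ and no equation is double-counted or missing at $k=K$. I must also justify the invertibility of $I+SP$ needed to invoke the second part of Lemma \ref{lem:L5}.
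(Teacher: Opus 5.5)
Your proposal is correct and follows essentially the same route as the paper: you splice the LCS$|_K$ solution with the tail $\lambda_k^\star=Px_k^\star$, $x_{k+1}^\star=G^1x_k^\star$, $\mu_k^\star=0$, use item (ii) in the form \eqref{eq:LCS1_eq3R} for tail complementarity, use Schur stability of $G^1$ (Lemma \ref{lem:L5}) with Lemma \ref{lem:L00}(i) for square-summability, and conclude via Theorem \ref{cor:C2}. The only minor difference is that you check directly from \eqref{eq:RiccatiP} that the tail solves \eqref{eq:KFBeq}, whereas the paper appeals to Remark \ref{rem:Px}; your check is, if anything, the more direct justification that the tail actually exists.
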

\begin{proof}
    For any stabilizing solutions of \eqref{eq:KFBeq}, by Remark \ref{rem:Px} we have, $\lambda_{k+1}^{\star}=Px_{k+1}^{\star}=PG^1x_k^{\star}$ and $x_k^{\star}=\phi(k-K)x_K^{\star},~\forall k\geq K$. Hence, by condition \ref{item:Th4_ii} of the theorem, it is straightforward to verify that, for such stabilizing solution of \eqref{eq:KFBeq}, the complementarity condition \eqref{eq:LCS1_eq3R} holds $\forall k\geq K$ with $\mu_k^{\star}=0,~k\geq K$ (It is worth noting that, even under $\lambda_{k+1}^{\star}=PG^1x_k^{\star}$ and condition \ref{item:Th4_ii}, \eqref{eq:LCS1_eq3R} may admit nonzero solutions $\mu_k^{\star}\geq 0$ for $k\geq K$. Thus, $\mu_k^{\star}=0,~\forall k\geq K$ is merely one admissible choice). Comparing \eqref{eq:KFBeq} and \eqref{eq:LCS1_eq3R} with LCS \eqref{eq:LCS1}, we note that, for all $k\geq K$, $\lambda_k^{\star}=Px_k^{\star}$ and $\mu_k^{\star} = 0$ solve the LCS \eqref{eq:LCS1}. Therefore, any solution of LCS$|_{K}$  with terminal conditions $\lambda_{K}^{\star}=Px_K^{\star}$, $\mu_K^{\star}=0$ (which exist by condition \ref{item:Th4_i} of the theorem) can be extended to a solution of infinite-horizon LCS \eqref{eq:LCS1} by setting $\lambda_k^{\star}=Px_k^{\star}$ (i.e., by appending stabilizing solution of \eqref{eq:KFBeq}) and $\mu_k^{\star} = 0,~\forall k\geq K$. This extension is well defined due to the consistent boundary conditions at $K$, namely, $\lambda_{K}^{\star}=Px_K^{\star}$, $\mu_K^{\star}=0$.

    Next, we show that the above constructed solution of LCS \eqref{eq:LCS1} is square-summable. As $\mu_k^{\star}=0$ for all $k\geq K$, $\sum_{k=0}^{\infty}\normx{\mu_k^{\star}}^2=\sum_{k=0}^{K-1}\normx{\mu_k^{\star}}^2<\infty$ and thus $(\mu_k^{\star})_{k=0}^{\infty}\in \ell^2(\R^{c}_{+})$. Also,  $x_{k+1}^{\star}=G^1x_k^{\star}$, $\forall k\geq K$ and $G^1$ is Schur stable (Lemma \ref{lem:L5}). So, by Lemma \ref{lem:L00}.(i), $(x_k^{\star})_{k=K}^{\infty}$ is square-summable. Since $\lambda_k^{\star}=Px_k^{\star},~k\geq K$, $(\lambda_k^{\star})_{k=K}^{\infty}$ is also square-summable (i.e., $\sum_{k=K}^{\infty}\normx{x_k^{\star}}^2<\infty$ and $\sum_{k=K}^{\infty}\normx{\lambda_k^{\star}}^2<\infty$). Since $K\in\mathbb{N}_0$ is finite, it follows that $\sum_{k=0}^{\infty}\normx{x_k^{\star}}^2=\sum_{k=0}^{K-1}\normx{x_k^{\star}}^2+\sum_{k=K}^{\infty}\normx{x_k^{\star}}^2<\infty$ and $\sum_{k=0}^{\infty}\normx{\lambda_k^{\star}}^2=\sum_{k=0}^{K-1}\normx{\lambda_k^{\star}}^2+\sum_{k=K}^{\infty}\normx{\lambda_k^{\star}}^2<\infty$.
    As all conditions of Theorem \ref{cor:C2}  hold, the proof follows from Theorem \ref{cor:C2}.
\end{proof}
\begin{remark}\label{rem:Sol_loss}
    As discussed in the proof of Theorem \ref{th:Th_K}, enforcing $\mu_k^{\star}=0$ for all $k\geq K$ can exclude solutions with $\mu_k^{\star}\neq 0$ for $k\geq K$, although this loss diminishes as $K$ increases. Moreover, note that, if the conditions of  Theorem \ref{th:Th_K} hold for some finite $K$, they also hold for any $\bar{K}\geq K$. So, larger choices of $K$ can be used to reduce the potential loss of solutions (also see Remark \ref{rem:Kchoice} and Section \ref{sec:Numerical}).
    %
\end{remark}
\begin{remark}\label{rem:r_implied}
    If condition \ref{item:Th4_ii} in Theorem \ref{th:Th_K} holds $\forall k\geq K$, then, $r> 0$ as $x_k^{\star} \to 0$. Thus, assumption $r> 0$, although not made explicitly, is implied by condition \ref{item:Th4_ii} of Theorem \ref{th:Th_K} (compare with Remark \ref{rem:ri_A1}).
\end{remark}
\begin{remark}\label{rem:LCS2LCP}
    Although Theorem \ref{th:Th_K} provides a constructive proof, two issues remain. First, $K$ is not known \emph{a priori}. Second, verifying condition \ref{item:Th4_i} of Theorem \ref{th:Th_K} is nontrivial.
\end{remark}
In view of Remark \ref{rem:LCS2LCP}, we again reformulate the condition \ref{item:Th4_i} of Theorem \ref{th:Th_K}. To this end, we first present the following Lemma.
\begin{lemma}\label{lem:L6}
     Let Assumptions \ref{ass:G1}, \ref{ass:G2} and \ref{ass:A_4} hold and for a given finite $K\in\mathbb{N}_0$, $\lambda_{K}^{\star}=Px_K^{\star}$, $\mu_k^{\star}=0,~k\geq K$. Then the state-costate trajectories \eqref{eq:LCS1_eq1}-\eqref{eq:LCS1_eq2} of LCS \eqref{eq:LCS1}, satisfy $\lambda_k^{\star}=Px_k^{\star}+\zeta_k,~\forall k< K$, where $P$ is the solution of coupled Riccati equations \eqref{eq:RiccatiP} and $\zeta_k$, with $\zeta_K=0$, satisfy the following backward linear recursive equation 
			\begin{align}
				\zeta_k=H^1\zeta_{k+1}+H^2\mu_k^{\star},~~ \forall k< K.  \label{eq:Zetabackward1}
			\end{align}
\end{lemma}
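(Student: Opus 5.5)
The plan is to treat $\zeta_k$ as a definition rather than something to construct. For $k\le K$ set $\zeta_k:=\lambda_k^{\star}-Px_k^{\star}$. Then $\lambda_k^{\star}=Px_k^{\star}+\zeta_k$ holds trivially, and the terminal condition $\lambda_K^{\star}=Px_K^{\star}$ gives $\zeta_K=0$. What remains is to show that these $\zeta_k$ obey the backward recursion \eqref{eq:Zetabackward1} for $k<K$. I will derive it by direct substitution into the state-costate equations \eqref{eq:LCS1_eq1}-\eqref{eq:LCS1_eq2}, written in the stacked notation introduced before \eqref{eq:LCS1_eq3R}. I use the standing hypothesis, as in Lemma \ref{lem:L5}, that $I+SP$ is invertible, so that $G^1,G^2,G^3$ are defined. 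The hypothesis $\mu_k^{\star}=0$ for $k\ge K$ is not needed for this step; it only ensures consistency with the tail solution of Theorem \ref{th:Th_K}.

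\textbf{Step 1 (state equation).} Stacking \eqref{eq:LCS1_eq1} gives
\[
x_{k+1}^{\star}=Ax_k^{\star}-S\lambda_{k+1}^{\star}+\bar{B}R^{-1}(\otimes_{i=1}^{N}N^{ii})^{\prime}\mu_k^{\star}.
\]
Substituting $\lambda_{k+1}^{\star}=Px_{k+1}^{\star}+\zeta_{k+1}$ and moving $SPx_{k+1}^{\star}$ to the left gives $(I+SP)x_{k+1}^{\star}=Ax_k^{\star}-S\zeta_{k+1}+\bar{B}R^{-1}(\otimes N^{ii})^{\prime}\mu_k^{\star}$. Hence
\[
x_{k+1}^{\star}=G^1x_k^{\star}+G^2\zeta_{k+1}+G^3\mu_k^{\star}.
\]

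\textbf{Step 2 (costate equation).} Stacking \eqref{eq:LCS1_eq2} gives $\lambda_k^{\star}=Qx_k^{\star}+(I_N\otimes A^{\prime})\lambda_{k+1}^{\star}-(\otimes M^i)^{\prime}\mu_k^{\star}$. I insert $\lambda_{k+1}^{\star}=Px_{k+1}^{\star}+\zeta_{k+1}$ and then the expression for $x_{k+1}^{\star}$ from Step 1. Collecting terms, the coefficient of $x_k^{\star}$ is $Q+(I_N\otimes A^{\prime})PG^1=Q+(I_N\otimes A^{\prime})P(I+SP)^{-1}A$, which equals $P$ by the coupled Riccati equation \eqref{eq:RiccatiP}. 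Subtracting $Px_k^{\star}$ from both sides therefore leaves only the $\zeta_{k+1}$ and $\mu_k^{\star}$ terms:
\[
\zeta_k=(I_N\otimes A^{\prime})(\zeta_{k+1}+PG^2\zeta_{k+1})+\big((I_N\otimes A^{\prime})PG^3-(\otimes M^i)^{\prime}\big)\mu_k^{\star}.
\]
The coefficient of $\mu_k^{\star}$ is exactly $H^2$.

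\textbf{Step 3 (matching $H^1$).} The coefficient of $\zeta_{k+1}$ comes out as $(I_N\otimes A^{\prime})(I+PG^2)=(I_N\otimes A^{\prime})\big(I-P(I+SP)^{-1}S\big)$. I expect the main obstacle to be only this bookkeeping of signs. The coefficient must be reconciled with the stated $H^1=(I_N\otimes A^{\prime})(I-PG^2)$, and with the paper's $G^2=-(I+SP)^{-1}S$ my substitution gives $I+PG^2$, which disagrees in sign. I would recheck the substitution carefully. If the discrepancy persists, I would state explicitly that the correct coefficient is $(I_N\otimes A^{\prime})(I+PG^2)$, equivalently $(I_N\otimes A^{\prime})(I-P(I+SP)^{-1}S)$, and read the definition of $H^1$ in that sense.

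With that settled, \eqref{eq:Zetabackward1} holds for every $k<K$ with $\zeta_K=0$. Running the recursion backward from $k=K-1$ determines all $\zeta_k$, $k<K$, uniquely from $(\mu_k^{\star})_{k<K}$, which completes the argument.
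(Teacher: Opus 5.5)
Your argument is correct and is essentially the paper's computation with the logic reversed. The paper takes $\zeta_k$ generated by the backward recursion from $\zeta_K=0$ and shows that $\lambda_k^{\star}-(Px_k^{\star}+\zeta_k)$ obeys a homogeneous backward recursion with zero terminal value; you instead define $\zeta_k:=\lambda_k^{\star}-Px_k^{\star}$ and verify the recursion directly, which is equivalent because the recursion with $\zeta_K=0$ determines $\zeta_k$ uniquely.

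Your sign concern is justified, and the error is a typo in the paper, not in your substitution. The coefficient that actually arises is $(I_N\otimes A^{\prime})(I+PG^2)=(I_N\otimes A^{\prime})(I_{Nn}+PS)^{-1}$. The paper's own proof implicitly needs this coefficient, and it is consistent with the paper's $F^2=-R^{-1}B^{\prime}(I+PG^2)$. The stated $H^1=(I_N\otimes A^{\prime})(I-PG^2)$ should therefore read with a plus sign.
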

\begin{proof}
  Using the definitions of $G^1,G^2$ and $G^3$, \eqref{eq:LCS1_eq1}, for all $k<K$ can be written as follows:
    \begin{align}
        x_{k+1}^{\star}&=G^1x_k^{\star}+G^2\big(\lambda_{k+1}^{\star}-(Px_{k+1}^{\star}+\zeta_{k+1})\big)\nonumber\\
        &\quad+G^2\zeta_{k+1}+G^3\mu_k^{\star}.\label{eq:X5}
    \end{align}
    Similarly, $\forall i\in\N$ and $k<K$, \eqref{eq:LCS1_eq2} can be compactly expressed as $\lambda_k^{\star}-(Px_k^{\star}+\zeta_k)=H^1\big(\lambda_{k+1}^{\star}-(Px_{k+1}^{\star}+\zeta_{k+1})\big)+(Q+(I_N\otimes A^{\prime})P(I+SP)^{-1}A-P)x_k^{\star}+(H^1\zeta_{k+1}+H^2\mu_k^{\star}-\zeta_k)=H^1\big(\lambda_{k+1}^{\star}-(Px_{k+1}^{\star}+\zeta_{k+1})\big)$.
    The last expression is due to \eqref{eq:RiccatiP} and \eqref{eq:Zetabackward1}. As $\zeta_K=0$ and $\lambda_{K}^{\star}=Px_K^{\star}$, we have 
     $\lambda_K^{\star}-(Px_K^{\star}+\zeta_K)=0$ and thus from the above recursive relation, $\lambda_k^{\star}=Px_k^{\star}+\zeta_k, \forall k<K$.
\end{proof}
\noindent Next, using the relation $\lambda_k^{\star}=Px_k^{\star}+\zeta_k$, in \eqref{eq:X5}, we have 
\begin{align}
    x_{k+1}^{\star}&=G^1x_k^{\star}+G^2\zeta_{k+1}+G^3\mu_k^{\star}.\label{eq:X6}
\end{align}
Similarly, using $\lambda_{k+1}^{\star}-PG^1x_k^{\star}=\zeta_{k+1}$, in \eqref{eq:BarU1} and \eqref{eq:LCS1_eq3R}, we obtain
    \begin{align}
    &\bu_k^{\star}=F^1x_k^{\star}+F^2\zeta_{k+1}+F^3\mu_k^{\star},\label{eq:M_u}\\
    &0 \leq (\bar{M}+\bar{N}F^1)x_k^{\star}+\bar{N}F^2\zeta_{k+1}+\bar{N}F^3\mu_k^{\star}+r \perp \mu_k^{\star} \geq 0.\label{eq:M_LCP}
\end{align}
Next, similar to \cite{Partha:2026a}, we express \eqref{eq:X6}, \eqref{eq:Zetabackward1} and \eqref{eq:M_LCP} for all $k<K$ as a single linear complementarity problem (LCP). 
From \eqref{eq:Zetabackward1}, with $\zeta_K=0$ (see Lemma \ref{lem:L6}), we have
\begin{subequations}\label{eq:FB_xZeta}
    \begin{align}
    \zeta_k=\sum_{\tau=k}^{K-1}\varphi(\tau-k)H^2\mu_{\tau}^{\star},\label{eq:Zeta1}
\end{align}
where, $\varphi(\tau)=(H^1)^{\tau}$ for $\tau>0$ and $\varphi(0)=I$. Similarly from \eqref{eq:X6}, we have
\begin{align*}
    x_k^{\star}=\phi(k)x_{0}+\sum_{\tau=0}^{k-1}\phi(k-\tau-1)G^2\zeta_{\tau+1}+\sum_{\tau=0}^{k-1}\phi(k-\tau-1)G^3\mu_{\tau}^{\star}
\end{align*}
where, $\phi(\tau)=(G^1)^{\tau}$ for $\tau>0$ and $\phi(0)=I$. Using \eqref{eq:Zeta1}, we can eliminating $\zeta_{k}$, from the above state equation as follows:
\begin{align}
    x_k^{\star}&=\phi(k)x_0+ \sum_{\rho=1}^{K-1}
    \Big(\sum_{\tau=0}^{\min(k-1,\rho-1)}\phi(k-\tau-1)G^2\varphi(\rho-\tau-1)\Big)\nonumber\\
    &\hspace{0.5in}\times H^2 \mu_\rho^\star+ \sum_{\tau=0}^{k-1}\phi(k-\tau-1)G^3\mu_\tau^{\star}
\end{align}
\end{subequations}

Finally, we stack all variables for $k<K$ as $x_{\K}^{\star}=\col{x_{k}^{\star}}_{k=0}^{K-1}$, $\bu^{\star}_\K=\col{u^{\star}_k}_{k=0}^{K-1}$, $\upmu^{\star}_\K=\col{\mu^{\star}_k}_{k=0}^{K-1}$ and $\zeta_\K=\col{\zeta_{k+1}}_{k=0}^{K-1}$, to write the forward and backward equations \eqref{eq:FB_xZeta} compactly as:
\begin{subequations}\label{eq:X_Zeta1}
    \begin{align}
    &x_{\K}^{\star}=\mathbf{\Phi}_0x_0+\mathbf{\Phi}_1\upmu_{\K}^{\star},\label{eq:X_K}\\
    &\zeta_\K=\mathbf{\Psi}_1\upmu_{\K}^{\star},
    \end{align}
\end{subequations}
where $\mathbf{\Phi}_0=\col{\phi(k)}_{k=0}^{K-1}$, $[\mathbf{\Phi}_1]_{k\tau}=0$ for $k=1$, $[\mathbf{\Phi}_1]_{k\tau}=\phi(k-\tau-1)G^3+\sum_{\rho=0}^{\tau-2}
\phi(k-\rho-2)\,G^2\,\varphi(\tau-\rho-2)\,H^2$ for $k>\tau \geq 1$, $[\mathbf{\Phi}_1]_{k\tau}=\sum_{\rho=0}^{k-2}
\phi(k-\rho-2)\,G^2\,\varphi(\tau-\rho-2)\,H^2$ for $2\leq k\leq \tau\leq K$, $[\mathbf{\Psi}_1]_{k\tau}=\varphi(\tau-k-1)H^2$ for $\tau>k$ and  $[\mathbf{\Psi}_1]_{k\tau}=0$ for $\tau\leq k$ with $k,\tau=1,\dots,K$.
Similarly, \eqref{eq:M_u} and \eqref{eq:M_LCP}, $\forall k<K$, can be written as
 \begin{align}
    &\bu^{\star}_\K=(I_K\otimes F^1)x_{\K}^{\star}+(I_K\otimes F^2)\zeta_\K+(I_K\otimes F^3)\upmu^{\star}_\K, \label{eq:U_K1}\\
    & 0 \leq (I_K\otimes(\bar{M}+\bar{N}F^1))x_{\K}^{\star}+(I_K\otimes\bar{N}F^2)\zeta_\K\nonumber\\
    &\hspace{0.35in}+(I_K\otimes\bar{N}F^3)\upmu^{\star}_\K+(1_K\otimes r) \perp \upmu^{\star}_\K \geq 0.\label{eq:LCP_K}
\end{align}

The next result presents an LCP reformulation. To this end, we introduce some additional notation. Let $\mathsf{M}=(I_K\otimes(\bar{M}+\bar{N}F^1))\mathbf{\Phi}_1+(I_K\otimes\bar{N}F^2)\mathbf{\Psi}_1+(I_K\otimes\bar{N}F^3)$, $\mathsf{q}=(I_K\otimes(\bar{M}+\bar{N}F^1))\mathbf{\Phi}_0,~\mathsf{L}=(I_K\otimes F^1)\mathbf{\Phi}_0$, $\mathsf{F}=(I_K\otimes F^1)\mathbf{\Phi}_1+(I_K\otimes F^2)\mathbf{\Psi}_1+(I_K\otimes F^3)$ and $\bar{G}_k=(G^1)^{k-K}(G^1[\mathbf{\Phi}_1]_{K}+[0_{n\times m(K-1)} ~G^3])$.
\begin{theorem}\label{th:LCP}
    Let Assumptions \ref{ass:G1}, \ref{ass:G2} and \ref{ass:A_4} hold. Let for a given $x_0\in\X_0$, there exist a finite $K\in\mathbb{N}_0$ such that  the following  large-scale linear complementarity problem  
	\begin{align}
		\mathrm{LCP}(x_0):\quad 0 \leq \mathsf{M}\upmu_{\K}^{\star}+\mathsf{q}x_0+(1_K\otimes r) \perp \upmu_{\K}^{\star} \geq 0, \label{eq:FinalOLNElcp}
	\end{align}
    is solvable and $\big(\bar{M}+\bar{N}F^1\big)\big((G^1)^{k}x_0+\bar{G}_k\upmu_{\K}^{\star}\big)+r>0$ for all $k\geq K$, where $P$ is the solution of the coupled Riccati equations \eqref{eq:RiccatiP}. 
    Then, OL-GNE strategy profile of \eqref{eq:Game1}, $\forall k<K$ is given by 
	\begin{align}
		\bu_\K^{\star}=\mathsf{L}x_0+\mathsf{F}\upmu_{\K}^{\star}, \label{eq:FinalOLNE}
	\end{align}
	and for $k\geq K$ is given by 
    \begin{align}
        \bu_{k}^{\star}&=F^1\big((G^1)^{k}x_0+\bar{G}_k\upmu_{\K}^{\star}\big).\label{eq:U_afterK}
    \end{align}
\end{theorem}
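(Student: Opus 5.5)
Our approach is to reduce Theorem \ref{th:LCP} to Theorem \ref{th:Th_K} by showing that a solution $\upmu_{\K}^{\star}$ of $\mathrm{LCP}(x_0)$ generates a solution of LCS$|_K$ with terminal conditions $\lambda_K^\star=Px_K^\star$, $\mu_K^\star=0$. We then check that the strict inequality in the hypothesis is exactly condition \ref{item:Th4_ii} of Theorem \ref{th:Th_K} written in terms of $x_0$ and $\upmu_{\K}^{\star}$. The explicit formulas \eqref{eq:FinalOLNE}–\eqref{eq:U_afterK} are then read off from the stacked relations \eqref{eq:X_Zeta1}, \eqref{eq:U_K1} and the tail dynamics $x_{k+1}^\star=G^1x_k^\star$.

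\textbf{Step 1 (finite-horizon construction).} Given a solution $\upmu_{\K}^{\star}=\col{\mu_k^\star}_{k=0}^{K-1}$ of \eqref{eq:FinalOLNElcp}, we define $\zeta_k$ for $k\le K$ by the backward recursion \eqref{eq:Zetabackward1} with $\zeta_K=0$, i.e.\ by \eqref{eq:Zeta1}. We define $x_k^\star$ for $k\le K$ by the forward recursion \eqref{eq:X6} from $x_0$, and set $\lambda_k^\star:=Px_k^\star+\zeta_k$, so that in particular $\lambda_K^\star=Px_K^\star$, and set $\mu_K^\star=0$.

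We then reverse the computation in the proof of Lemma \ref{lem:L6}. Using the coupled Riccati equation \eqref{eq:RiccatiP} and the definitions of $G^1,G^2,G^3,H^1,H^2$, we verify that these sequences satisfy \eqref{eq:LCS1_eq1}–\eqref{eq:LCS1_eq2} for $k<K$. For \eqref{eq:LCS1_eq1} this amounts to checking that $(I+SP)x_{k+1}^\star=Ax_k^\star-S\zeta_{k+1}+\bar B R^{-1}(\otimes N^{ii})'\mu_k^\star$, which is $x_{k+1}^\star=Ax_k^\star-S\lambda_{k+1}^\star+\dots$ after substituting $\lambda_{k+1}^\star=Px_{k+1}^\star+\zeta_{k+1}$.

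By \eqref{eq:X_Zeta1}, the stacked vectors satisfy $x_\K^\star=\mathbf\Phi_0x_0+\mathbf\Phi_1\upmu_\K^\star$ and $\zeta_\K=\mathbf\Psi_1\upmu_\K^\star$. Substituting these into \eqref{eq:LCP_K} gives exactly $\mathsf M\upmu_\K^\star+\mathsf qx_0+1_K\otimes r$. Hence $\mathrm{LCP}(x_0)$ is equivalent to the complementarity conditions \eqref{eq:M_LCP} for $k<K$. Since $\lambda_{k+1}^\star-PG^1x_k^\star=\zeta_{k+1}$ by \eqref{eq:X6}, these are in turn equivalent to \eqref{eq:LCS1_eq3R}, i.e.\ \eqref{eq:LCS1_eq3}. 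Thus condition \ref{item:Th4_i} of Theorem \ref{th:Th_K} holds.

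\textbf{Step 2 (tail condition).} For $k\ge K$ Theorem \ref{th:Th_K} uses $x_k^\star=(G^1)^{k-K}x_K^\star$, so we compute $x_K^\star$ in terms of the data. From \eqref{eq:X6} at $k=K-1$ with $\zeta_K=0$ we get $x_K^\star=G^1x_{K-1}^\star+G^3\mu_{K-1}^\star$. Moreover $x_{K-1}^\star=[\mathbf\Phi_0]_{K}x_0+[\mathbf\Phi_1]_K\upmu_\K^\star$, where $[\cdot]_K$ denotes the last block row. Since $G^1(G^1)^{K-1}=(G^1)^K$, this yields $(G^1)^{k-K}x_K^\star=(G^1)^kx_0+\bar G_k\upmu_\K^\star$. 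Therefore the hypothesis is precisely condition \ref{item:Th4_ii}. Theorem \ref{th:Th_K} then gives square-summable solutions and, via Theorem \ref{cor:C2}, an OL-GNE given by \eqref{eq:BarU1}.

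\textbf{Step 3 (formulas).} For $k<K$, \eqref{eq:BarU1} coincides with \eqref{eq:M_u}, because $\lambda_{k+1}^\star=PG^1x_k^\star+\zeta_{k+1}$ and the $F^i$ are defined accordingly. Stacking and inserting \eqref{eq:X_Zeta1} into \eqref{eq:U_K1} gives \eqref{eq:FinalOLNE}, with $\mathsf L$ and $\mathsf F$ as defined. For $k\ge K$ we have $\mu_k^\star=0$ and $\zeta_{k+1}=0$, so $\bu_k^\star=F^1x_k^\star$ with $x_k^\star$ as in Step 2, which is \eqref{eq:U_afterK}.

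The main obstacle is the index bookkeeping. One issue is matching the block conventions in $\mathbf\Phi_1,\mathbf\Psi_1$ (which index $\zeta_{k+1}$ with $k$ from $0$ to $K-1$) to the definition of $\bar G_k$. The other is verifying carefully that the algebra of Lemma \ref{lem:L6} is reversible, so that every solution of the LCP yields a genuine solution of LCS$|_K$ rather than only the converse implication.
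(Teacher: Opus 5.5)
Your proposal follows the paper's proof essentially step for step: you identify $\mathrm{LCP}(x_0)$ with LCS$|_K$ under the terminal conditions $\lambda_K^{\star}=Px_K^{\star}$, $\mu_K^{\star}=0$ via Lemma~\ref{lem:L6} and the stacking \eqref{eq:X_Zeta1}, you compute $x_K^{\star}$ from \eqref{eq:X6} so that the strict inequality becomes condition~\ref{item:Th4_ii} of Theorem~\ref{th:Th_K}, and you read off \eqref{eq:FinalOLNE}--\eqref{eq:U_afterK} — and you are more explicit than the paper about reversing Lemma~\ref{lem:L6}. One small correction for when you carry this out: the correct relation is $\lambda_{k+1}^{\star}-PG^1x_k^{\star}=(I+PG^2)\zeta_{k+1}+PG^3\mu_k^{\star}$, not $\zeta_{k+1}$ (a slip shared with the text just before the theorem), and this is exactly what the definitions of $F^2,F^3$ encode; it also forces $H^1=(I_N\otimes A^{\prime})(I+PG^2)$ in \eqref{eq:Zetabackward1}, and your conclusions are unaffected.
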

\begin{proof}
The LCP \eqref{eq:FinalOLNElcp} is obtained by substituting \eqref{eq:X_Zeta1} in \eqref{eq:LCP_K}. So, by Lemma \ref{lem:L6} and the steps before this theorem, we note that solvability of LCP \eqref{eq:FinalOLNElcp} is same as solvability of LCS$|_{K}$ with terminal conditions $\lambda_{K}^{\star}=Px_K^{\star}$, $\mu_K^{\star}=0$. Further, if LCP \eqref{eq:FinalOLNElcp} is solvable, then from \eqref{eq:X_K}, we have $x_{K-1}^{\star}=[\mathbf{\Phi}_0]_{K}x_0+[\mathbf{\Phi}_1]_{K}\upmu_{\K}^{\star}$. So, from \eqref{eq:X6}, the equilibrium state at $K$ (with $\zeta_K=0$ from Lemma \ref{lem:L6}) is given by $x_K^{\star}=G^1 x_{K-1}^{\star}+G^3\mu_{K-1}^{\star}=G^1[\mathbf{\Phi}_0]_{K}x_0+G^1[\mathbf{\Phi}_1]_{K}\upmu_{\K}^{\star}+G^3\mu_{K-1}^{\star}$. Also, using $[\mathbf{\Phi}_0]_{K}=\phi(K-1)=(G^1)^{K-1}$, we can express $x_K^{\star}$ as
\begin{align}
    x_K^{\star}=(G^1)^{K}x_0+(G^1[\mathbf{\Phi}_1]_{K}+[0_{n\times m(K-1)}~G^3])\upmu_{\K}^{\star}.\label{eq:Th5_ii}
\end{align}
Thus, by \eqref{eq:Th5_ii}, the condition  $\big(\bar{M}+\bar{N}F^1\big)\big((G^1)^{k}x_0+\bar{G}_k\upmu_{\K}^{\star}\big)+r>0,~\forall k\geq K$, is equivalent to the condition \ref{item:Th4_ii} in Theorem \ref{th:Th_K}. Therefore, under the assumptions of this theorem, all conditions of Theorem \ref{th:Th_K} hold. Moreover, in view of Lemma \ref{lem:L6}, \eqref{eq:U_K1} is equivalent to \eqref{eq:BarU1} for all $k<K$ and \eqref{eq:FinalOLNE} is obtained by substituting \eqref{eq:X_Zeta1} in \eqref{eq:U_K1}. Thus, OL-GNE strategy profile of \eqref{eq:Game1} for all $k<K$ is given by \eqref{eq:FinalOLNE}. Finally, using $\mu_k^{\star}=0$, $\lambda_{k+1}^{\star}=Px_{k+1}^{\star}=P(G^1)^{k+1-K}x_K^{\star},~\forall k\geq K$ and \eqref{eq:Th5_ii} in \eqref{eq:BarU1},  OL-GNE strategy profile of all players $\forall k\geq K$ is given by \eqref{eq:U_afterK}.
\end{proof}
\begin{remark}\label{rem:LCP_dim}
    OL-GNE for all $k<K$ can be obtained by directly solving the mixed LCP formed by \eqref{eq:X6}, \eqref{eq:Zetabackward1}, and \eqref{eq:M_LCP}, in $2N(K-1)n+NKc$ number of decision variables. In contrast, by Theorem \ref{th:LCP}, the same can be obtained by solving the large scale LCP \eqref{eq:FinalOLNElcp} in only $NKc$ variables. This reformulation is also advantageous as existence results and numerical solution methods for LCPs are well established in the optimization community; see \cite{Pang:2009}.
    %
\end{remark}
\begin{remark}\label{rem:Kchoice}
    A limitation of Theorem \ref{th:LCP} is that $K$ is not known \emph{a priori}. In practice, the horizon $K$ can gradually increased until  LCP \eqref{eq:FinalOLNElcp} is solvable and $\big(\bar{M}+\bar{N}F^1\big)\big((G^1)^{k}x_0+\bar{G}_k\upmu_{\K}^{\star}\big)+r>0$ hold $\forall k\geq K$. 
    While larger $K$ improves solution quality (Remark \ref{rem:Sol_loss}), it also increases the dimension of \eqref{eq:FinalOLNElcp} and thus the computational cost (Remark \ref{rem:LCP_dim}).
    For alternative numerical approach, see the receding horizon approach presented in some recent works \cite{Benenati:2026}, \cite{Baghbadorani:2026}.
\end{remark}
\begin{remark}
    As $G^1$ is Schur stable (Lemma \ref{lem:L5}), the term $r>0$ (see Remark \ref{rem:r_implied}) dominates
    in $(\bar{M}+\bar{N}F^1)((G^1)^{k}x_0+\bar{G}_k\upmu_{\K}^{\star})+r$ for large $k$. Thus, in practice, the condition $(\bar{M}+\bar{N}F^1)((G^1)^{k}x_0+\bar{G}_k\upmu_{\K}^{\star})+r>0$ needs to be checked only for finitely many $k\geq K$.
\end{remark}
\section{Numerical Illustration}\label{sec:Numerical}
We consider the discrete-time vehicle platooning example from \cite{Benenati:2026, Baghbadorani:2026} with sampling period $\tau_s=0.1s$. The platoon consists of $N$ vehicles. The leader ($i=1$) tracks a reference velocity $v^{ref}$, while each follower $i\in\N\setminus\{1\}$ regulates its velocity $v_k^i$ to match that of the predecessor and maintains a desired spacing $d^i+h^iv_k^i$, where $h^i$ denotes the time-headway parameter. 
For all vehicles except leader, local state $x_k^i$ is defined in terms of corresponding tracking and spacing errors as $x^i_k=\left[
\begin{smallmatrix}
        p^{i-1}_k-p^{i}_k-d^i-h^iv^i_k\\
        v_k^{i-1}-v_k^i
    \end{smallmatrix}
\right], ~\forall i\in\N\setminus\{1\}$.
Since the leader’s relative position to itself is zero, her state is defined as $x^1_k=\col{0, v^{ref}-v^1_k}$. 
Stacking all agent states as $x_k=\col{x_k^i}_{i=1}^{N}$ with inputs $u_k^i$, the overall dynamics can be written as $x_{k+1}=Ax_k+\sum_{i\in\N}B^iu_k^i$, where $A=\oplus\left(\left[
\begin{smallmatrix}
        0 &0\\
        0 &1
    \end{smallmatrix}
\right], I_{N-1}\otimes\left[
\begin{smallmatrix}
        1 &\tau_{s}\\0 &1
    \end{smallmatrix}
\right]\right)$, $B^i=\delta^{i+1}_N \otimes \left[
\begin{smallmatrix}
        \tau^2_s/2\\\tau_s
    \end{smallmatrix}
\right]-\delta^i_N \otimes \left[
\begin{smallmatrix}
        h^i\tau_s+\tau^2_s/2\\\tau_s
    \end{smallmatrix}
\right],~i\in\N\setminus\{1,N\}$, $B^1=\delta^2_N \otimes \left[
\begin{smallmatrix}
        \tau^2_s/2\\\tau_s
    \end{smallmatrix}
\right]-\delta^1_N \otimes \left[
\begin{smallmatrix}
        0\\\tau_s
    \end{smallmatrix}
\right]$ and $B^N=-\delta_N^N \otimes \left[
\begin{smallmatrix}
        h^N\tau_s+\tau^2_s/2\\\tau_s
    \end{smallmatrix}
\right]$
with $\delta_N^{i}\in\R^N$ denoting the $i$th canonical basis vector.
Since the system violates stabilizability Assumption \ref{ass:G1}.(iii), following \cite{Benenati:2026, Baghbadorani:2026}, we employ the local prestabilizing controller $K^i_{stab}=(\delta_N^i)^{\prime}\otimes[-1~~ -1]$ to each agent $i\in\N$. Note that the system matrix is singular since its first row is zero.
Further, $\forall k\in\mathbb{N}_0$, following constraints are imposed:
\begin{subequations}\label{eq:VPconstraints}
\begin{align}
    &p_{k+1}^{i-1}\geq d^{min}+p_{k+1}^i,~\quad\text{(collision avoidance)},\label{eq:VP_C1}\\
    &v^{min}\leq v_{k+1}^i\leq v^{max},\quad \text{(velocity limits)},\label{eq:VP_C2}\\
    &u^{min}\leq u_k^i\leq u^{max},~~~\quad \text{(control limits)}.\label{eq:VP_C3}
\end{align}
\end{subequations}
We define $u^i_k=-K^i_{stab}x_k+\bar{u}_k^i$ and $\bar{A}=A-\sum_{i\in\N}B^iK^i_{stab}$. Also, note that from the state definitions, we have $v_k^i=v^{ref}-\sum_{j=1}^{i}[x_k^j]_2,~i\in\N$ and $p_{k+1}^{i-1}-p_{k+1}^i=[x_{k+1}^i]_1+h^iv_{k+1}^i+d^i, i\in\N\setminus\{1\}$. Thus, \eqref{eq:VP_C1} for all $i\in\N\setminus\{1\}$ and \eqref{eq:VP_C2}-\eqref{eq:VP_C3} for all $i\in\N$ can be written as
\begin{align*}
    \big((\delta_{2N}^1)^{\prime}+h^iC^i\big)x_{k+1}+(h^iv^{ref}+d^i-d^{min})&\geq 0,\\
    C^ix_{k+1}+v^{ref}-v^{min}&\geq 0,\\
    -C^ix_{k+1}+v^{max}-v^{ref}&\geq 0,\\
    -K^i_{stab}x_k+\bar{u}_k^i-u^{min}&\geq 0,\\
    K^i_{stab}x_k-\bar{u}_k^i+u^{max}&\geq 0,
\end{align*}
where, $C^i=(\sum_{j=1}^{i}\delta_N^j)^{\prime}\otimes [0~~-1],~i\in\N$.
In view of Remark \ref{rem:purestateC}, we reformulate the above pure state constraints and express this vehicle platooning example in standard form \eqref{eq:Game1} using the state dynamics $x_{k+1}=\bar{A}x_k+\sum_{i\in\N}B^i\bar{u}_k^i$ (including prestabilizing local controller). All inequality constraints can be expressed in the form \eqref{eq:Gconstraints}, with
\begin{align*}
    M^i=\begin{bmatrix}
        \col{((\delta_{2N}^1)^{\prime}+h^jC^j)\bar{A}}_{j=2}^{N}\\
        ~\col{C^j\bar{A}}_{j=1}^{N}\\
        -\col{C^j\bar{A}}_{j=1}^{N}\\
        -K^i_{stab}\\
        ~K^i_{stab}
    \end{bmatrix}, 
\end{align*}
\begin{align*}
    N^{ii}=\begin{bmatrix}
        \hat{N}\\
        ~\col{C^jB^i}_{j=2}^{N}\\
        -\col{C^jB^i}_{j=2}^{N}\\
         ~1\\
         -1
    \end{bmatrix},~N^{ij}=\begin{bmatrix}
        \hat{N}\\
        ~\col{C^lB^j}_{l=2}^{N}\\
        -\col{C^lB^j}_{l=2}^{N}\\
         0\\
         0
    \end{bmatrix},
\end{align*}
\begin{align*}
   r^i=\begin{bmatrix}
        \col{(h^jv^{ref}+d^j-d^{min}}_{j=2}^{N}\\
        (v^{ref}-v^{min})1_{5}\\
        (v^{max}-v^{ref})1_{5}\\
        -u^{min}\\
        ~u^{max}
    \end{bmatrix}
\end{align*}
where, $\hat{N}:=\col{((\delta_{2N}^1)^{\prime}+h^lC^l)B^j}_{l=2}^{N}$.  
For numerical illustration, we take $N=5$, $v^{ref}=30$, $v^{max}=35$, $v^{min}=25$, $u^{max}=2$, $u^{nin}=-2$, $d^{min}=10$, $Q^i=I$, $R^{ii}=1$, $R^{ij}=0,~i,j\in\N,i\neq j$,  $d^i=15,~i\in\N\setminus\{1\}, ~x_0=\col{0,-3, -10,3.5, -11,2, -4.5,-3, -10,4}$.
For these numerical values, Assumptions \ref{ass:G1}, \ref{ass:G2} and \ref{ass:A_4} are satisfied. 

Fig. \ref{fig:Fig1} shows vehicle positions (relative to the leading agent) and velocities for $K=55$ (the conditions of Theorem \ref{th:LCP} hold for any $K\ge 45$). In Fig \ref{fig:VPfig1}, the dotted lines with same colors represent the desired steady state values and shaded regions represent distance constraints. In Fig \ref{fig:VPfig2}, the dotted line denotes $v^{ref}$ and the dashed black lines represent the constraints \eqref{eq:VP_C2}. All vehicle velocities converge to the reference value without violating any of the constraints in \eqref{eq:VPconstraints}.
Fig. \ref{fig:Fig2} shows two OL-GNE (solid and dashed). Since the conditions of Theorem \ref{th:LCP} hold for both $K=45$ and $K=55$, OL-GNE can be computed in either case. Both equilibria in Fig. \ref{fig:Fig2} can be obtained for $K=55$, since $\mu_{k}^{\star}=0$ for all $k\geq 47$ in both cases. In contrast, with $K=45$, the proposed reformulation recovers only equilibria satisfying $\mu_{k}^{\star}=0,~\forall k\geq 45$. 
Accordingly, although the solid-line trajectory is an OL-GNE of the original game, for  $K=45$, it is not recovered by the reformulation as $\mu_{45}^{\star}$ and  $\mu_{46}^{\star}$ are non-zero. The dashed-line equilibrium, however, satisfies $\mu_{k}^{\star}=0,~\forall k\geq 45$ and thus can be obtained with $K=45$. 
This illustrates that the choice of $K$ influences the set of OL-GNE that can be obtained through the proposed reformulation; see Remarks \ref{rem:Sol_loss} and \ref{rem:Kchoice}.

\begin{figure}[t] 
	\centering 
	\input{VP_2}
	\caption{Positions with respect to leading agent (panel (a)) and velocities (panel (b)) of all agents over time $(t=\tau_s k)$, with $K=55$. 
    }  
	\label{fig:Fig1}
\end{figure}
\section{Conclusion}\label{sec:Conclusions}
In this note, we studied a class of infinite-horizon LQ difference games with coupled affine inequality constraints. We derived necessary conditions for the existence of open-loop generalized Nash equilibria (OL-GNE) and established their sufficiency under additional assumptions. The sufficient conditions are expressed in terms of square-summable solutions of associated infinite-horizon linear complementarity systems. Further, we reformulated the sufficient conditions and showed that the computation of OL-GNE can be reduced to solving a large-scale LCP together with verifying additional conditions. Future work will investigate the existence and computation of feedback GNE for this class of difference games.

\begin{figure}[t] 
	\centering 
	\input{VP_C2}
    \vspace{-0.1in}
	\caption{Comparison of OL-GNE for $K=45$ and $K=55$. }  
	\label{fig:Fig2}
\end{figure}

\bibliographystyle{IEEEtran}
\bibliography{main}

\end{document}

%% file: VP_2.tex
\pgfplotsset{axis line style={black!45}}
\subfloat[]{\label{fig:VPfig1}
	\scalebox{0.575}{\begin{tikzpicture}[scale=1, >=latex']
	    \begin{axis}[width=15cm, height=4.75cm,  ylabel=Position ($m$),
    yticklabel style={
    text width=1.5em,
    align=right
},  grid=both,
			grid style={line width=.1pt, draw=gray!10}, minor tick num=3, xmin=0, xmax=20,ymin=-20,ymax=250]
			\addplot[MidnightBlue, very thick] table [x index=1, y index=7]{Data/vehicle_platoon3.dat}; 
			\addlegendentry{$p_k^{1}$};
            \addplot[name path=p2top, Red, very thick]table[x index=1, y index=8]{Data/vehicle_platoon3.dat}; 
            \addlegendentry{$p_k^{2}$};
            \addplot[name path=p2bottom,draw=none, forget plot]table[x index=1, y expr=\thisrow{p2}-10]{Data/vehicle_platoon3.dat}; 
            \addplot[Red, fill=Red, fill opacity=0.25, draw=none, forget plot]fill between[of=p2top and p2bottom];
            \addplot[name path=p3top, Green, very thick]table[x index=1, y index=9]{Data/vehicle_platoon3.dat}; 
            \addlegendentry{$p_k^{3}$};
            \addplot[name path=p3bottom,draw=none, forget plot]table[x index=1, y expr=\thisrow{p3}-10]{Data/vehicle_platoon3.dat}; 
            \addplot[Green, fill=Green, fill opacity=0.25, draw=none, forget plot]fill between[of=p3top and p3bottom];
            \addplot[name path=p4top, YellowOrange, very thick]table[x index=1, y index=10]{Data/vehicle_platoon3.dat}; 
            \addlegendentry{$p_k^{4}$};
            \addplot[name path=p4bottom,draw=none, forget plot]table[x index=1, y expr=\thisrow{p4}-10]{Data/vehicle_platoon3.dat}; 
            \addplot[YellowOrange, fill=YellowOrange, fill opacity=0.25, draw=none, forget plot]fill between[of=p4top and p4bottom];
            \addplot[name path=p5top, Violet, very thick]table[x index=1, y index=11]{Data/vehicle_platoon3.dat}; 
            \addlegendentry{$p_k^{5}$};
            \addplot[name path=p5bottom,draw=none, forget plot]table[x index=1, y expr=\thisrow{p5}-10]{Data/vehicle_platoon3.dat}; 
            \addplot[Violet, fill=Violet, fill opacity=0.25, draw=none, forget plot]fill between[of=p5top and p5bottom];
            \draw[-,black!60, thick](5.5,-20) to (5.5,240) node[yshift=-2.5cm,right,black]{$K=55$};
            \draw[-,Red,very thick, dotted](0,54.98) to (20,54.98) node[xshift=-8.5cm,above,black]{};
            \draw[-,Green, very thick, dotted](0,109.95) to (20,109.95) node[xshift=-8.5cm,above,black]{};
            \draw[-,YellowOrange, very thick, dotted](0,164.88) to (20,164.88) node[xshift=-8.5cm,above,black]{};
            \draw[-,Violet, very thick, dotted](0,219.73) to (20,219.73) node[xshift=-8.5cm,above,black]{};
		\end{axis}
\end{tikzpicture}}}  
\\ \vspace{-0.12in}
\subfloat[]{\label{fig:VPfig2}
	\begin{tikzpicture}[scale=0.575, >=latex']
		\begin{axis}[ width=15cm, height=4.75cm,  xlabel=$t$, ylabel=Velocity ($m/s$),
    yticklabel style={
    text width=1.5em,
    align=right
}, grid=both,
			grid style={line width=.1pt, draw=gray!10}, minor tick num=3, xmin=0, xmax=20,ymin=24,ymax=36]
            \draw[-,black!60, very thick, dotted](0,30) to (20,30) node[xshift=-8.5cm,above,black]{$v^{ref}$}; 
            \draw[-,black!100, very thick, dashed](0,35) to (20,35) node[xshift=-3.5cm,below,black]{${v}^{max}$};
            \draw[-,black!100, very thick, dashed](0,25) to (20,25) node[xshift=-3.5cm,above,black]{${v}^{min}$}; 
            \draw[-,black!60, thick](5.5,24) to (5.5,36) node[yshift=-0.5cm,right,black]{$K=55$}; 
			\addplot[MidnightBlue, very thick] table [x index=1, y index=2]{Data/vehicle_platoon3.dat}; 
			\addlegendentry{$v_k^{1}$};
			\addplot[Red,   very thick] table [x index=1, y index=3]{Data/vehicle_platoon3.dat}; 
			\addlegendentry{$v_k^{2}$};
		 \addplot[Green, very thick ] table [x index=1, y index=4]{Data/vehicle_platoon3.dat}; 
			\addlegendentry{$v_k^{3}$};
		\addplot[YellowOrange, very thick ] table [x index=1, y index=5]{Data/vehicle_platoon3.dat}; 
			\addlegendentry{$v_k^{4}$};	
            \addplot[Violet, very thick ] table [x index=1, y index=6]{Data/vehicle_platoon3.dat}; 
			\addlegendentry{$v_k^{5}$};	
		\end{axis}
\end{tikzpicture}} 

%% file: VP_C2.tex
\pgfplotsset{axis line style={black!45}}

\begin{tikzpicture}[scale=0.575,>=latex']
\begin{axis}[
    width=15cm,
    height=4.75cm,
    xlabel=$t$,
    ylabel={Controls ($m/s^2$)},
    yticklabel style={
        text width=1.5em,
        align=right},
    grid=both,
    grid style={line width=.1pt, draw=gray!10},
    minor tick num=3,
    xmin=0,xmax=7,
    ymin=-2.5,ymax=2.5]

\draw[-,black,thick,dashed] (0,2) -- (7,2)
node[xshift=-7.5cm,below] {$u^{\max}$};

\draw[-,black,thick,dashed] (0,-2) -- (7,-2)
node[xshift=-7.5cm,above] {$u^{\min}$};

\draw[-,black!60,thick,dashed] (4.5,-2.5) -- (4.5,2.5)
node[yshift=-0.55cm,left] {$K=45$};

\draw[-,black!60,thick] (5.5,-2.5) -- (5.5,2.5)
node[yshift=-0.55cm,right] {$K=55$};

\addplot[MidnightBlue,thick,forget plot]
table[x index=1,y index=12]{Data/vehicle_platoon3.dat};

\addplot[MidnightBlue,dashed,very thick,forget plot]
table[x index=1,y index=12]{Data/vehicle_platoon4.dat};

\addlegendimage{
legend image code/.code={
    \draw[MidnightBlue,thick] (0,0.10cm)--(0.4cm,0.10cm);
    \draw[MidnightBlue,dashed,thick] (0,-0.10cm)--(0.4cm,-0.10cm);
}}
\addlegendentry{$u_k^{1}$}

\addplot[Red,thick,forget plot]
table[x index=1,y index=13]{Data/vehicle_platoon3.dat};

\addplot[Red,dashed,very thick,forget plot]
table[x index=1,y index=13]{Data/vehicle_platoon4.dat};

\addlegendimage{
legend image code/.code={
    \draw[Red,thick] (0,0.10cm)--(0.4cm,0.10cm);
    \draw[Red,dashed,thick] (0,-0.10cm)--(0.4cm,-0.10cm);
}}
\addlegendentry{$u_k^{2}$}

\addplot[Green,thick,forget plot]
table[x index=1,y index=14]{Data/vehicle_platoon3.dat};

\addplot[Green,dashed,very thick,forget plot]
table[x index=1,y index=14]{Data/vehicle_platoon4.dat};

\addlegendimage{
legend image code/.code={
    \draw[Green,thick] (0,0.10cm)--(0.4cm,0.10cm);
    \draw[Green,dashed,thick] (0,-0.10cm)--(0.4cm,-0.10cm);
}}
\addlegendentry{$u_k^{3}$}

\addplot[YellowOrange,thick,forget plot]
table[x index=1,y index=15]{Data/vehicle_platoon3.dat};

\addplot[YellowOrange,dashed,very thick,forget plot]
table[x index=1,y index=15]{Data/vehicle_platoon4.dat};

\addlegendimage{
legend image code/.code={
    \draw[YellowOrange,thick] (0,0.10cm)--(0.4cm,0.10cm);
    \draw[YellowOrange,dashed,thick] (0,-0.10cm)--(0.4cm,-0.10cm);
}}
\addlegendentry{$u_k^{4}$}

\addplot[Violet,thick,forget plot]
table[x index=1,y index=16]{Data/vehicle_platoon3.dat};

\addplot[Violet,dashed,very thick,forget plot]
table[x index=1,y index=16]{Data/vehicle_platoon4.dat};

\addlegendimage{
legend image code/.code={
    \draw[Violet,thick] (0,0.10cm)--(0.4cm,0.10cm);
    \draw[Violet,dashed,thick] (0,-0.10cm)--(0.4cm,-0.10cm);
}}
\addlegendentry{$u_k^{5}$}

\end{axis}
\end{tikzpicture}

%% file: main.bib
@book{Basar:1999,
	title={Dynamic Noncooperative Game Theory: Second Edition},
	author={Ba{\c{s}}ar, T. and Olsder, G.J.},
	series={Classics in Applied Mathematics},
	year={1999},
	publisher={Society for Industrial and Applied Mathematics}
}

@book{Engwerda:2005,
  title={{LQ} dynamic optimization and differential games},
  author={Engwerda, Jacob},
  year={2005},
  publisher={John Wiley \& Sons}
}

@article{Arrow:1954,
  title={Existence of an equilibrium for a competitive economy},
  author={Arrow, Kenneth J and Debreu, Gerard},
  journal={Econometrica: Journal of the Econometric Society},
  pages={265--290},
  year={1954},
  publisher={JSTOR}
}

@article{Facchinei:2010_a,
  title={Generalized {N}ash equilibrium problems},
  author={Facchinei, Francisco and Kanzow, Christian},
  journal={Annals of Operations Research},
  volume={175},
  number={1},
  pages={177--211},
  year={2010},
  publisher={Springer}
}

@article{Harker:1991,
  title={Generalized {N}ash games and quasi-variational inequalities},
  author={Harker, Patrick T},
  journal={European journal of Operational research},
  volume={54},
  number={1},
  pages={81--94},
  year={1991},
  publisher={Elsevier}
}

@article{Rosen:1965,
  title={Existence and uniqueness of equilibrium points for concave {N}-person games},
  author={Rosen, J Ben},
  journal={Econometrica},
  pages={520--534},
  year={1965}
}

@article{Etesami:2019,
  title={Dynamic games in cyber-physical security: An overview},
  author={Etesami, S Rasoul and Ba{\c{s}}ar, Tamer},
  journal={Dynamic Games and Applications},
  volume={9},
  number={4},
  pages={884--913},
  year={2019},
  publisher={Springer}
}

@article{Jorgensen:2010,
  title={Dynamic games in the economics and management of pollution},
  author={J{\o}rgensen, Steffen and Mart{\'\i}n-Herr{\'a}n, Guiomar and Zaccour, Georges},
  journal={Environmental Modeling \& Assessment},
  volume={15},
  number={6},
  pages={433--467},
  year={2010},
  publisher={Springer}
}

@article{Li:2019,
  title={Differential game theory for versatile physical human--robot interaction},
  author={Li, Yanan and Carboni, Gerolamo and Gonzalez, Franck and Campolo, Domenico and Burdet, Etienne},
  journal={Nature Machine Intelligence},
  volume={1},
  number={1},
  pages={36--43},
  year={2019},
  publisher={Nature Publishing Group UK London}
}

@article{Chen:2014,
  title={Cooperative control of power system load and frequency by using differential games},
  author={Chen, Haoyong and Ye, Rong and Wang, Xiaodong and Lu, Runge},
  journal={IEEE Transactions on Control Systems Technology},
  volume={23},
  number={3},
  pages={882--897},
  year={2014},
  publisher={IEEE}
}

@article{Monti:2024,
  title={Feedback and open-loop {N}ash equilibria for {LQ} infinite-horizon discrete-time dynamic games},
  author={Monti, Andrea and Nortmann, Benita and Mylvaganam, Thulasi and Sassano, Mario},
  journal={SIAM Journal on Control and Optimization},
  volume={62},
  number={3},
  pages={1417--1436},
  year={2024},
  publisher={SIAM}
}

@article{Pappas:1980,
  title={On the numerical solution of the discrete-time algebraic {R}iccati equation},
  author={Pappas, Thrasyvoulos and Laub, Alan and Sandell, Nils},
  journal={IEEE Transactions on Automatic Control},
  volume={25},
  number={4},
  pages={631--641},
  year={1980},
  publisher={IEEE}
}

@article{Partha:2023,
  title={Linear-quadratic mean-field-type difference games with coupled affine inequality constraints},
  author={Mohapatra, Partha Sarathi and Reddy, Puduru Viswanadha},
  journal={IEEE Control Systems Letters},
  volume={7},
  pages={1987--1992},
  year={2023},
  publisher={IEEE}
}

@article{Partha:2026a,
  title={Generalized open-loop {N}ash equilibria in linear-quadratic difference games with coupled-affine inequality constraints},
  author={Mohapatra, Partha Sarathi and Reddy, Puduru Viswanadha},
  journal={IEEE Transactions on Automatic Control},
  volume={71},
  number={3},
  pages={1969--1976},
  year={2026},
  publisher={IEEE}
}

@article{Partha:2026b,
  title={Feedback {S}tackelberg-{N}ash equilibria in difference games with quasi-hierarchical interactions and inequality constraints},
  author={Mohapatra, Partha Sarathi and Reddy, Puduru Viswanadha and Zaccour, Georges},
  journal={IEEE Transactions on Automatic Control},
  volume={71},
  number={6},
  pages={3848--3863},
  year={2026},
  publisher={IEEE}
}

@article{Benenati:2026,
  title={Linear-quadratic dynamic games as receding-horizon variational inequalities},
  author={Benenati, Emilio and Grammatico, Sergio},
  journal={IEEE Transactions on Automatic Control},
  volume={71},
  number={4},
  pages={2404--2417},
  year={2026},
  publisher={IEEE}
}

@article{Baghbadorani:2026,
  title={Fast Newton methods for linear-quadratic dynamic games with application to autonomous vehicle platooning and intersection crossing},
  author={Baghbadorani, Reza Rahimi and Grammatico, Sergio},
  journal={arXiv preprint:2605.01898},
  year={2026}
}

@book{Pang:2009,
  title={The linear complementarity problem},
  author={Cottle, Richard W and Pang, Jong-Shi and Stone, Richard E},
  series={Classics in Applied Mathematics},
  year={2009},
  publisher={SIAM}
}

@article{Engwerda:2000,
  title={Feedback {N}ash equilibria in the scalar infinite horizon {LQ}-game},
  author={Engwerda, Jacob},
  journal={Automatica},
  volume={36},
  number={1},
  pages={135--139},
  year={2000},
  publisher={Elsevier}
}

@article{Engwerda:2007,
  title={Algorithms for computing {N}ash equilibria in deterministic {LQ} games},
  author={Engwerda, Jacob},
  journal={Computational Management Science},
  volume={4},
  number={2},
  pages={113--140},
  year={2007},
  publisher={Springer}
}

@article{Engwerda:2012,
  title={Feedback {N}ash equilibria for linear quadratic descriptor differential games},
  author={Engwerda, Jacob C and others},
  journal={Automatica},
  volume={48},
  number={4},
  pages={625--631},
  year={2012},
  publisher={Elsevier}
}

@article{Engwerda:2009,
  title={The open-loop linear quadratic differential game for index one descriptor systems},
  author={Engwerda, Jacob Christiaan and others},
  journal={Automatica},
  volume={45},
  number={2},
  pages={585--592},
  year={2009},
  publisher={Elsevier}
}

@article{Tanwani:2019,
  title={Feedback {N}ash equilibrium for randomly switching differential--algebraic games},
  author={Tanwani, Aneel and Zhu, Quanyan},
  journal={IEEE Transactions on Automatic Control},
  volume={65},
  number={8},
  pages={3286--3301},
  year={2019},
  publisher={IEEE}
}

@article{Laine:2023,
  title={The computation of approximate generalized feedback {N}ash equilibria},
  author={Laine, Forrest and Fridovich-Keil, David and Chiu, Chih-Yuan and Tomlin, Claire},
  journal={SIAM Journal on Optimization},
  volume={33},
  number={1},
  pages={294--318},
  year={2023},
  publisher={SIAM}
}

@article{Zazo:2016,
  title={Dynamic potential games with constraints: Fundamentals and applications in communications},
  author={Zazo, Santiago and Macua, Sergio Valcarcel and S{\'a}nchez-Fern{\'a}ndez, Matilde and Zazo, Javier},
  journal={IEEE Transactions on Signal Processing},
  volume={64},
  number={14},
  pages={3806--3821},
  year={2016},
  publisher={IEEE}
}

@article{Reddy:2015,
  title={Open-loop {N}ash equilibria in a class of linear-quadratic difference games with constraints},
  author={Reddy, Puduru Viswanadha and Zaccour, Georges},
  journal={IEEE Transactions on Automatic Control},
  volume={60},
  number={9},
  pages={2559--2564},
  year={2015},
  publisher={IEEE}
}

@article{Reddy:2017,
  title={Feedback {N}ash equilibria in linear-quadratic difference games with constraints},
  author={Reddy, Puduru Viswanadha and Zaccour, Georges},
  journal={IEEE Transactions on Automatic Control},
  volume={62},
  number={2},
  pages={590--604},
  year={2017},
  publisher={IEEE}
}

@book{Blot:2014,
  author    = {Blot, Jo{\"e}l and Hayek, Na{\"\i}la},
  title     = {Infinite-horizon optimal control in the discrete-time framework},
  publisher = {Springer Briefs in Optimization. Springer},
  year      = {2014}
}

@article{Aseev:2017,
  title={Optimality conditions for discrete-time optimal control on infinite horizon},
  author={Aseev, SM and Krastanov, MI and Veliov, VM},
  journal={Pure and Applied Functional Analysis},
  volume={2},
  number={3},
  pages={395--409},
  year={2017}
}
